\numberwithin{equation}{section}
\def\whitebox{{\hbox{\hskip 1pt
 \vrule height 6pt depth 1.5pt
 \lower 1.5pt\vbox to 7.5pt{\hrule width
    3.2pt\vfill\hrule width 3.2pt}%
 \vrule height 6pt depth 1.5pt
 \hskip 1pt } }}
\def\dmid{\,||\,}
\def\qed{\ifhmode\allowbreak\else\nobreak\fi\hfill\quad\nobreak
     \whitebox\medbreak}
\newcommand{\ignore}[1]{}
\newcommand{\ga}{\gamma}
\newcommand{\F}{\mathbb{F}}
\newcommand{\Z}{\mathbb{Z}}
\newcommand{\Q}{\mathbb{Q}}
\newcommand{\la}{\langle}
\newcommand{\ra}{\rangle}
\newcommand{\Cc}{{\mathbb C}}
\newcommand{\cP}{{\mathcal P}}
\newcommand{\Trace}{{\rm Tr}}
\newcommand{\Aut}{{\rm Aut}}
\newcommand{\Dev}{{\rm Dev}}
\newcommand{\Cay}{{\rm Cay}}
\newtheorem{thm}{Theorem}[section]
\newtheorem{lemma}[thm]{Lemma}
\newtheorem{remark}[thm]{Remark}
\newtheorem{example}[thm]{Example}
\numberwithin{equation}{section}
\begin{document}

\title[Skew Hadamard Difference Sets]
{Cyclotomic Constructions of Skew Hadamard Difference Sets}

\author[Feng and Xiang]{Tao Feng, Qing Xiang$^*$}

\thanks{$^*$Research supported in part by NSF Grant DMS 1001557, and by the
Overseas Cooperation Fund (grant 10928101) of China.}

\address{Department of Mathematical Sciences, University of Delaware, Newark, DE 19716, USA}
\email{feng@math.udel.edu}

\address{Department of Mathematical Sciences, University of Delaware, Newark, DE 19716, USA} \email{xiang@math.udel.edu}

\keywords{Cyclotomy, Difference set, Gauss sum, Index 2 Gauss sum, Partial difference set, Skew Hadamard difference set.}

\begin{abstract} 
We revisit the old idea of constructing difference sets from cyclotomic classes. Two constructions of skew Hadamard difference sets are given in the additive groups of finite fields by using union of cyclotomic classes of $\F_q$ of order $N=2p_1^m$, where $p_1$ is a prime and $m$ a positive integer. Our main tools are index 2 Gauss sums, instead of cyclotomic numbers. 
\end{abstract}

\maketitle

\section{Introduction}
We assume that the reader is familiar with the basic theory of difference sets as can be found in \cite{Lander} and Chapter 6 of \cite{bjl}. For a survey of recent progress in this area we refer the reader to \cite{Xiang}.

A difference set $D$ in a finite group $G$ is called {\it skew Hadamard} if $G$ is the disjoint union of $D$, $D^{(-1)}$, and $\{1\}$, where $D^{(-1)}=\{d^{-1}\mid d\in D\}$. The primary example (and for many years, the only known example in abelian groups) of skew Hadamard difference sets is the classical Paley difference set in $(\F_q,+)$ consisting of the nonzero squares of $\F_q$, where $\F_q$ is the finite field of order $q$, and $q$ is a prime power congruent to 3 modulo 4.  Skew Hadamard difference sets are currently under intensive study, see \cite{dy, dwx, wqwx, wh, tfeng, muzy}.  There were two major conjectures in this area:  (1) If  an abelian group $G$ contains a skew Hadamard difference set, then $G$ is necessarily elementary abelian. (2) Up to equivalence the Paley difference sets mentioned above are the only skew Hadamard difference sets in abelian groups. The first conjecture is still open in general. We refer the reader to \cite{cxs} for the known results on this conjecture. The second conjecture failed spectacularly: Ding and Yuan \cite{dy} constructed a family of skew Hadamard difference sets in $(\F_{3^m},+)$, where $m\geq 3$ is odd, and showed that the 2nd and the 3rd examples in the family  are inequivalent to the Paley difference sets. Very recently, Muzychuk \cite{muzy} constructed exponentially many inequivalent skew Hadamard difference sets over an elementary abelian group of order $q^3$.

We give a short survey of the known constructions of skew Hadamard difference sets. Shortly after the appearance of the Ding-Yuan construction \cite{dy}, by using certain permutation polynomials arising from the Ree-Tits slice symplectic spreads, Ding, Wang and Xiang \cite{dwx} constructed a class of skew Hadamard difference sets in $(\F_{3^m}, +)$, where $m$ is odd. Next the classical Paley construction was generalized from finite fields to commutative semifields in \cite{wqwx}. As a consequence, every finite commutative semifield of order congruent to 3 modulo 4 gives rise to a skew Hadamard difference set. The first author \cite{tfeng}  then constructed a family of skew Hadamard difference sets in the nonabelian group of order $p^3$ and exponent $p$, where $p$ is an odd prime. Prior to \cite{tfeng}, only two nonabelian skew Hadamard difference sets were known \cite{kibler}; both are in nonabelian groups of order $27$. Motivated by \cite{tfeng}, Muzychuk \cite{muzy} has now given a prolific construction of skew Hadamard difference sets in elementary abelian groups of order $q^3$, where $q$ is a prime power. We also mention that the construction in \cite{tfeng} was recently generalized in \cite{cpol} and \cite{chenfeng}.

Let $q=p^f$, where $p$ is a prime and $f$ a positive integer. Let $\gamma$ be a fixed primitive element of $\F_q$ and $N|(q-1)$ with $N>1$. Let $C_0=\langle \gamma^N\rangle$, and $C_i=\gamma^i C_0$ for $1\leq i\leq N-1$. The $C_i$ are called the {\it cyclotomic classes of order $N$} of $\F_q$. In this paper, we give two constructions of skew Hadamard difference sets in the additive groups of finite fields by using unions of cyclotomic classes. 

The idea of constructing difference sets (and strongly regular Cayley graphs) from cyclotomic classes of course goes back to  Paley \cite{paley}.  In the mid-20th century, Baumert, Chowla,  Hall, Lehmer, Storer, Whiteman, Yamamoto, etc. pursued this line of research vigorously. Storer's book \cite{storer} contains a summary of results in this direction up to 1967. See also Chapter 5 of \cite{baumert} for a summary. This method for constructing difference sets, however, has had only very limited success. Let $C_i$ be as above. It is known \cite[p.~123--124]{bjl} that a single cyclotomic class can form a difference set in $(\F_q,+)$ if $N=2,4,$ or $8$ and $q$ satisfies certain conditions. (Note that in order to obtain difference sets this way, the conditions on $q$ are quite restrictive when $N=4$ or $8$). It is conjectured that the converse is also true. Namely, if $C_0$ is a difference set in $(\F_q,+)$, then $N$ is necessarily $2, 4$, or $8$. This conjecture has been verified \cite{evans} up to $N=20$. If one uses a union of cyclotomic classes, instead of just one single class, the only new family of difference sets found in this way is the Hall sextic difference sets in $(\F_q,+)$ formed by taking a union of three cyclotomic classes of order 6, where $q=4x^2+27$ is a prime power congruent to $1$ modulo $6$. One of the reasons that very few difference sets have been discovered by using unions of cyclotomic classes is that the investigations often relied on the so-called cyclotomic numbers and these numbers are in general very difficult to compute if $N$ is large. 

In this paper, we construct skew Hadamard difference sets in $(\F_q,+)$ by using union of cyclotomic classes of order $N=2p_1^m$ of $\F_q$, where $p_1$ is an odd prime, $q$ is a power of a prime $p$, $\gcd(p,N)=1$, $-1\not\in \langle p\rangle\subset (\Z/N\Z)^*$, and the order of $p$ modulo $N$ is half of $\phi(N)$ (here $\phi$ is the Euler phi function). This last condition is the so-called index 2 condition in the theory of Gauss sums.  The significance of our constructions is twofold. First,  other than the classical Paley difference sets, previously known abelian skew Hadamard difference sets were constructed either in $(\F_q,+)$, where $q$ is a power of $3$, or in groups of order $p^{3k}$, where $p$ is an odd prime.  The constructions in this paper can produce skew Hadamard difference sets in elementary abelian groups where no previous constructions were known except for the classical Paley difference sets. Secondly our constructions demonstrate that the old idea of constructing difference sets from cyclotomic classes is not a dead end, thus it should be further exploited. 

The recent success in constructing strongly regular Cayley graphs by using union of cyclotomic classes in \cite{FX} lends further credence to our opinion on cyclotomic methods for constructing difference sets. Since the constructions in this paper are motivated by those in \cite{FX}, we include here a brief discussion 
of the results in \cite{FX}. Let $D$ be a subset of $\F_{q}$ such that $-D=D$ and $0 \not \in D$. We define the {\it Cayley graph} $\Cay(\F_{q}, D)$ to be the graph with the elements of $\F_{q}$ as vertices; two vertices are adjacent if and only if their difference belongs to $D$. When $D$ is a subgroup of the multiplicative group $\F_{q}^*$ of  $\F_{q}$ and  $\Cay(\F_{q}, D)$ is strongly regular,  then we speak of a {\it cyclotomic strongly regular graph}. In particular, if $D$ is the subgroup of $\F_{q}^*$ consisting of the squares, where $q$ is a prime power congruent to 1 modulo 4,  then $\Cay(\F_{q}, D)$ is the well-known Paley graph. In \cite{FX}, we were interested in examples due to De Lange \cite{del} and Ikuta and Munemasa \cite{ikutam},  in which one single cyclotomic class does not give rise to a strongly regular Cayley graph while a union of several classes does. Generalizing these examples, we give two constructions of strongly regular Cayley graphs on finite fields $\F_q$ by using union of cyclotomic classes of $\F_q$ of order $N$, where $N=p_1^m$ or $p_1^mp_2$, $p_1$ and $p_2$ are distinct odd primes. The main tools used in the proofs are index $2$ Gauss sums.  In particular, we \cite{FX} obtain twelve infinite families of strongly regular Cayley graphs with new parameters.

\section{Gauss sums}
Let $p$ be a prime, $f$ a positive integer, and $q=p^f$. Let $\xi_{p}=e^{2\pi i/p}$ and let $\psi$ be the additive character of $\F_{q}$ defined by 
\begin{equation}\label{defaddchar}
\psi: \F_{q} \rightarrow \Cc^{*}, \quad \psi(x)=\xi_{p}^{\Trace(x)},
\end{equation}
where ${\rm Tr}$ is the absolute trace from $\F_q$ to $\F_p$. Let $\chi:\F_{q}^* \rightarrow \Cc^{*} $ be a character of $\F_{q}^{*}$. We define the {\it Gauss sum} by
$$ g_{\atop{\F_q}}(\chi)=\sum_{a \in \F_{q}^*} \chi(a)\psi(a).$$
Usually we simply write $g(\chi)$ for $g_{\atop{\F_q}}(\chi)$ if the finite field involved is clear from the context. Note that if $\chi_0$ is the trivial multiplicative character of
$\F_q$, then $g(\chi_0)=-1$. We are usually concerned with nontrivial Gauss sums $g(\chi)$, i.e., those with $\chi\neq \chi_0$. Gauss sums can be viewed as the
Fourier coefficients in the Fourier expansion of $\psi|_{\F_q^*}$
in terms of the multiplicative characters of
 $\F_q$. That is, for every $c\in \F_q^*$,
\begin{equation}\label{inv}
\psi(c)=\frac{1} {q-1}\sum_{\chi\in {\widehat \F_q^*}}g({\bar \chi})\chi(c),
\end{equation}
where ${\bar \chi}=\chi^{-1}$ and ${\widehat \F_q^*}$ denotes the complex character group of $\F_q^*$.

We first recall a few elementary properties of Gauss sums. For proofs of these properties, see \cite[Theorem 1.1.4]{bew}. The first is
\begin{equation}\label{absvalue} 
g(\chi){\overline {g(\chi)}}=q, \; {\rm if}\; \chi\neq \chi_0.
\end{equation}
The second is
\begin{equation}\label{pthpower}
g(\chi^p)=g(\chi), 
\end{equation}
and the third one is
\begin{equation}\label{conjugate}
g(\chi^{-1})=\chi(-1){\overline {g(\chi)}}.
\end{equation}

While it is easy to determine the absolute value of nontrivial Gauss sums (see (\ref{absvalue})), the explicit evaluation of Gauss sums is a difficult problem. However, there are a few cases where the Gauss sums $g(\chi)$ can be explicitly evaluated.  The simplest case is the so-called {\it semi-primitive case}, where there exists an integer $j$ such that $p^j\equiv -1$ (mod $N$) (here $N$ is the order of $\chi$ in ${\widehat \F_q^*}$). Some authors \cite{BMW, bew} also refer to this case as uniform cyclotomy, or pure Gauss sums. We refer the reader to \cite[p.~364]{bew} for the precise evaluation of Gauss sums in this case. 

The next interesting case is the index 2 case, where $-1$ is not in the subgroup $\la p\ra$, the cyclic group generated by $p$,   and $\la p\ra$ has index 2 in $(\Z/N\Z)^*$ (again here $N$ is the order of $\chi$ in ${\widehat \F_q^*}$). Many authors have studied this case, including Baumert and Mykkeltveit \cite{bmy}, McEliece \cite{McE}, Langevin \cite{Lang}, Mbodj \cite{Mbo}, Meijer and Van der Vlugt \cite{mv}, and Yang and Xia  \cite{yx}. In the index 2 case, it can be shown that $N$ has at most two odd prime divisors. For the purpose of constructing difference sets by taking union of cyclotomic classes, we will need $N$ to be even and $(q-1)/N$ to be odd (cf. \cite[p.~357]{bjl}). Below is the result on evaluation of Gauss sums that we will need in Section 3.

\begin{thm}\label{gs}{\em (\cite[Theorem 4.4, Case D]{yx})}
 Let $N=2p_1^m$, where $p_1>3$ is a prime, $p_1\equiv 3\pmod{4}$ and $m$ is a positive integer. Assume that $p$ is a prime, $\gcd(p,N)=1$, $[(\Z/N\Z)^*:\langle p\rangle]=2$.  Let $f=\phi(N)/2$, $q=p^f$,  and $\chi$ be a character of order $N$ of $\F_q^*$.  Then
 \[ g(\chi)=\begin{cases}(-1)^{\frac{p-1}{2}m}\sqrt{p^*}p^{\frac{f-1}{2}},\quad\quad\quad\quad\quad\quad\quad\quad\;\textup{ if }p_1\equiv 7\pmod{8},\\
(-1)^{\frac{p-1}{2}(m+1)}\sqrt{p^*}p^{\frac{f-1}{2}-h}(\frac{b+c\sqrt{-p_1}}{2})^2,\quad \textup{ if }p_1\equiv 3\pmod{8},\end{cases}\]
where $h$ is the class number of $\Q(\sqrt{-p_1})$, $p^*=(-1)^{\frac{p-1}{2}}p$, and $b,c$ are integers satisfying the following conditions

(i) $b,c\not\equiv 0 \pmod p$,
(ii) $b^2+p_1c^2=4p^{h}$,
(iii) $bp^{\frac {f-h}{2}}\equiv -2 \pmod {p_1}$.
Moreover, when $p\equiv 3\pmod{4}$, we have
\begin{enumerate}
\item for $0\leq t\leq m-1$,  

\[g(\chi^{p_1^t})=\begin{cases}(-1)^{m}\sqrt{-p} p^{(f-1)/2},\quad\quad\quad\quad\quad\quad\quad\quad\quad\;\textup{ if }p_1\equiv 7\pmod{8},\\
  (-1)^{m+1}\sqrt{-p} p^{(f-1)/2-hp_1^t}(\frac{b+c\sqrt{-p_1}}{2})^{2p_1^t},\quad\; \textup{ if }p_1\equiv 3\pmod{8};\end{cases}\]
 
\item $g(\chi^{2p_1^t})=p^{(f-p_1^th)/2}(\frac{b+c\sqrt{-p_1}}{2})^{p_1^t}$;

\item $g(\chi^{p_1^m})=(-1)^{(f-1)/2}p^{(f-1)/2}\sqrt{-p}$.
\end{enumerate}
\end{thm}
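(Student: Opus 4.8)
The plan is to follow the classical three–step route for evaluating index $2$ Gauss sums. First, determine the ideal generated by $g(\chi)$ in the ring of integers of a suitable abelian field by means of Stickelberger's theorem. Second, descend that ideal to the quadratic subfield of $\Q(\zeta_N)$ fixed by $\la p\ra$ and exhibit an explicit generator, thereby pinning $g(\chi)$ down up to a unit. Third, determine that unit — which will turn out to be a sign — by reducing modulo a prime above $p$ and comparing with the leading term of Stickelberger's congruence. Two quadratic fields are relevant throughout: $\Q(\sqrt{p^{*}})$, the quadratic subfield of $\Q(\zeta_p)$, which accounts for the order–$2$ part of $\chi$; and $\Q(\sqrt{-p_{1}})$, the quadratic subfield of $\Q(\zeta_{p_{1}})$ — here the hypothesis $p_{1}\equiv 3\pmod 4$ is used, as it gives $p_1^{*}=-p_1$. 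One checks at the outset that the index $2$ hypothesis forces $\textup{ord}_{p_1}(p)=(p_1-1)/2$, so that $p\bmod p_1$ is a square; quadratic reciprocity then gives $\bigl(\tfrac{-p_1}{p}\bigr)=1$, i.e.\ $p$ splits in $\Q(\sqrt{-p_1})$, say $(p)=\mathfrak p_0\overline{\mathfrak p}_0$; also $f=\phi(N)/2=p_1^{m-1}(p_1-1)/2$ is odd.

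I would prove the evaluation of $g(\chi)$ first. Since $p_1\nmid p-1$, the restriction $\chi|_{\F_p^*}$ is the quadratic character of $\F_p^*$, and together with $g(\chi^{p})=g(\chi)$ this shows $g(\chi)\in\Q(\sqrt{p^{*}},\sqrt{-p_1})$. Writing $\chi=\lambda\theta$ with $\lambda$ of order $2$ and $\theta$ of order $p_1^{m}$, and using the relation $g(\lambda)g(\theta)=J(\lambda,\theta)\,g(\lambda\theta)$ between Gauss and Jacobi sums together with $g(\lambda)=(p^{*})^{(f-1)/2}\sqrt{p^{*}}$ (the classical quadratic Gauss sum, $f$ odd), one obtains $g(\chi)=(p^{*})^{(f-1)/2}\sqrt{p^{*}}\cdot\beta$, where $\beta:=g(\theta)/J(\lambda,\theta)$ lies in $\Q(\sqrt{-p_1})$ (both $g(\theta)$ and $J(\lambda,\theta)$ are $p$–integral–part free and fixed by $\la p\ra$) and $\beta\overline\beta=1$. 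It therefore remains to evaluate $\beta$, which I would do by running the first two steps on $(g(\theta))$ and $(J(\lambda,\theta))$ separately: Stickelberger's congruence yields their factorizations into powers of $\mathfrak p_0$ and $\overline{\mathfrak p}_0$; since $h$ is odd (genus theory for $\Q(\sqrt{-p_1})$, $p_1\equiv 3\pmod 4$), the ideal $\mathfrak p_0^{h}$ is principal, and the first two steps identify a generator as $\tfrac{b+c\sqrt{-p_1}}{2}$ with $b^2+p_1c^2=4p^{h}$ and $b,c\not\equiv 0\pmod p$ (conditions (i),(ii) force this generator to have norm $p^h$ and to be coprime to $p$, so its ideal is exactly $\mathfrak p_0^{h}$). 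The dichotomy $p_1\equiv 7$ versus $p_1\equiv 3\pmod 8$ enters precisely here: this congruence governs whether $2$ splits or is inert in $\Q(\sqrt{-p_1})$, equivalently whether $\bigl(\tfrac{2}{p_1}\bigr)=\pm1$, and hence whether $(g(\theta))=(J(\lambda,\theta))$ — so that $\beta=\pm1$ and $g(\chi)=\pm\sqrt{p^{*}}\,p^{(f-1)/2}$ — or the two ideals differ by $\mathfrak p_0^{h}\overline{\mathfrak p}_0^{-h}$, so that $\beta=\pm\,p^{-h}\bigl(\tfrac{b+c\sqrt{-p_1}}{2}\bigr)^{2}$. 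As the only units of $\Q(\sqrt{-p_1})$ are $\pm1$ (since $p_1>3$), only a sign remains; the third step — Stickelberger's congruence modulo $\mathfrak p_0$ — fixes this sign and simultaneously resolves the choice of $\mathfrak p_0$ versus $\overline{\mathfrak p}_0$, and this normalization is exactly condition (iii), $bp^{(f-h)/2}\equiv -2\pmod{p_1}$.

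For the ``moreover'' part I would not repeat the analysis but reduce everything to the formula just proved. Item (3) is the classical quadratic Gauss sum $g(\chi^{p_1^{m}})=(-1)^{f-1}(\sqrt{p^{*}})^{f}$, simplified using that $f$ is odd and $p^{*}=-p$ (as $p\equiv 3\pmod 4$). For items (1) and (2): $\chi^{p_1^{t}}$ has order $2p_1^{m-t}$ and $\chi^{2p_1^{t}}$ has order $p_1^{m-t}$, each still an index $2$ situation modulo the respective order, and in each case $f=p_1^{t}f'$ with $f'$ the corresponding value $\phi(\cdot)/2$; since the order of each of these characters divides $p^{f'}-1$, it is the lift $\rho\circ N_{\F_{p^{f}}/\F_{p^{f'}}}$ of a character $\rho$ of $\F_{p^{f'}}^{*}$, and the Hasse–Davenport lifting relation gives $g_{\F_{p^{f}}}(\rho\circ N)=(-1)^{p_1^{t}-1}g_{\F_{p^{f'}}}(\rho)^{p_1^{t}}=g_{\F_{p^{f'}}}(\rho)^{p_1^{t}}$ (as $p_1^{t}$ is odd), reducing items (1) and (2) to the evaluations over $\F_{p^{f'}}$ already obtained (for item (1), the main formula with $m$ replaced by $m-t$; for item (2), the analogous odd–order index $2$ evaluation). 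The stated expressions then follow after collecting the exponents of $p$ and propagating signs through $(\sqrt{p^{*}})^{p_1^{t}}=(-1)^{t}p^{(p_1^{t}-1)/2}\sqrt{p^{*}}$ (using $p_1\equiv 3\pmod 4$). I expect the main obstacle to be not any single computation but the bookkeeping in the third step across all of these cases: keeping the signs $(-1)^{\frac{p-1}{2}m}$, $(-1)^{\frac{p-1}{2}(m+1)}$, and the like consistent with one fixed choice of $\mathfrak p_0$ and with condition (iii), and verifying that the Stickelberger digit–sum computations produce exactly the claimed exponents — in particular the appearance of the class number $h$ in the exponent of $p$.
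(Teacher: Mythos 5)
The paper does not prove this theorem: it is quoted verbatim from Yang and Xia \cite{yx} (Theorem 4.4, Case D), with the sign/unit-root subtleties treated in \cite{yx2}, so there is no in-paper argument to compare against. Your outline --- Stickelberger valuations to pin down the ideal of the Gauss sum, descent to the quadratic subfield $\Q(\sqrt{-p_1})$ fixed by $\la p\ra$ with a generator $\frac{b+c\sqrt{-p_1}}{2}$ of $\mathfrak{p}_0^h$ normalized by (i)--(iii), resolution of the residual unit $\pm 1$ by the Stickelberger congruence, and then Davenport--Hasse lifting to deduce items (1)--(3) from the lower-field evaluations --- is precisely the route taken in \cite{yx} (and in \cite{Lang}, \cite{Mbo}, \cite{bew}); your preliminary checks ($f$ odd, $p$ split in $\Q(\sqrt{-p_1})$, the decomposition $g(\chi)=g(\lambda)g(\theta)/J(\lambda,\theta)$, the lifting identities such as $(\sqrt{p^*})^{p_1^t}=(-1)^t p^{(p_1^t-1)/2}\sqrt{p^*}$ for $p\equiv 3\pmod 4$, and the compatibility of condition (iii) across $f$ and $f'$) are all correct. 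Do note, however, that what you label bookkeeping is the actual content of the theorem: establishing that the imbalance between the $\mathfrak{p}_0$- and $\overline{\mathfrak{p}}_0$-valuations is $0$ or $2h$ according as $p_1\equiv 7$ or $3 \pmod 8$ rests on the digit-sum (coset) computation combined with the class number formula $h=\bigl(2-\bigl(\tfrac{2}{p_1}\bigr)\bigr)^{-1}\sum_{0<a<p_1/2}\bigl(\tfrac{a}{p_1}\bigr)$ --- this, rather than the splitting of $2$ acting directly on $(g(\theta))$ versus $(J(\lambda,\theta))$, is where the dichotomy really comes from --- and the final sign determination modulo a prime above $p$ is exactly the delicate point that required the correction in \cite{yx2}; neither step is carried out in your sketch, so as written it is a faithful plan of the known proof rather than a complete one.
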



We will also need the Stickelberger congruence for Gauss sums, which we state below.  Let $p$ be a prime, $q=p^f$, and let $\xi_{q-1}$ be a complex primitive $(q-1)$th root of unity. Fix any prime ideal $\mathfrak{P}$ in $\Z[\xi_{q-1}]$ lying over $p$. Then $\Z[\xi_{q-1}]/\mathfrak{P}$ is a finite field of order $q$, which we 
identify with $\F_q$. Let $\omega_{\mathfrak{P}}$ be the 
Teichm\"uller character on $\F_q$, i.e., an isomorphism
$$\omega_{\mathfrak{P}}: \F_q^{*}\rightarrow
\{1,\xi_{q-1},\xi_{q-1}^2,\dots ,\xi_{q-1}^{q-2}\}$$ 
satisfying 
\begin{equation}\label{eq2.3}
\omega_{\mathfrak{P}}(\alpha)\quad ({\rm
mod}\hspace{0.1in}{\mathfrak{P}})=\alpha,
\end{equation}  
for all $\alpha$ in $\F_q^*$.
The Teichm\"uller character $\omega_{\mathfrak{P}}$ has order $q-1$;
hence it generates all multiplicative characters of $\F_q$. 

Let $\cP$ be the prime ideal of $\Z[\xi_{q-1},\xi_p]$ lying above
$\mathfrak{P}$. For an integer $a$, let
$$s(a)=\nu _{\cP}(g(\omega_{\mathfrak P}^{-a})),$$ 
where $\nu_{\cP}$ is the $\cP$-adic valuation. 
Thus ${\cP}^{s(a)}\dmid g(\omega_{\mathfrak P}^{-a})$. 
The following evaluation of
$s(a)$ is due to Stickelberger (see \cite[p.~344]{bew}).

 \begin{thm}\label{stick} 
Let $p$ be a prime and $q=p^f$.
For an integer $a$ not divisible by $q-1$,
let $a_0+a_1p+a_2p^2+\cdots +a_{f-1}p^{f-1}$, $0\leq a_i\leq p-1$,
be the $p$-adic expansion of the reduction of $a$ modulo $q-1$.
Then
$$s(a)=a_0+a_1+\cdots +a_{f-1},$$
that is, $s(a)$ is the
sum of the $p$-adic digits of the reduction of $a$ modulo $q-1$.
Furthermore, define
$$t(a)=a_0!a_1!\cdots a_{f-1}!,\; \pi=\xi_p-1.$$
Then with $s(a)$ and $\omega_{\mathfrak P}$ as above we have the congruence
$$g(\omega_{\mathfrak P}^{-a})\equiv -
\frac{\pi^{s(a)}}{t(a)}
\quad (\mathrm{mod}\,{\cP}^{s(a)+1}).$$
\end{thm}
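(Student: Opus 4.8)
The plan is to establish the congruence first in the single--digit case and then pass to a general exponent via the Gauss--Jacobi relation. We work throughout in the localization of $R=\Z[\xi_{q-1},\xi_p]$ at $\cP$. Since $\Q(\xi_{q-1})/\Q$ is unramified at $p$ while $\Q(\xi_p)/\Q$ is totally ramified of degree $p-1$, we have $\nu_{\cP}(\pi)=1$, $\nu_{\cP}(p)=p-1$, and $\nu_{\cP}$ restricted to $\Z[\xi_{q-1}]$ takes values in $(p-1)\Z_{\geq 0}$; moreover $\pi^{p-1}$ and $p$ are associates, since $\prod_{j=1}^{p-1}(1-\xi_p^j)=p$. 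Because $g(\omega_{\mathfrak{P}}^{-a})$ depends only on $a$ modulo $q-1$, we may assume $0<a<q-1$ with $p$--adic digits $a_0,\dots,a_{f-1}$; since $q-1=\sum_i(p-1)p^i$, the digits of $q-1-a$ are $p-1-a_i$, so $s(a)+s(q-1-a)=f(p-1)$.

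First I would handle the single--digit case: for $0\leq b\leq p-1$, one has $g(\omega_{\mathfrak{P}}^{-b})\equiv -\pi^b/b!\pmod{\cP^{b+1}}$ (read as $g(\chi_0)=-1$ when $b=0$). Writing $\psi(x)=(1+\pi)^{T(x)}$, where $T(x)\in\{0,\dots,p-1\}$ represents $\Trace(x)\in\F_p$, the binomial theorem gives $g(\omega_{\mathfrak{P}}^{-b})=\sum_{k=0}^{p-1}\pi^k\Sigma_k$ with $\Sigma_k=\sum_{x\in\F_q^*}\omega_{\mathfrak{P}}^{-b}(x)\binom{T(x)}{k}\in\Z[\xi_{q-1}]$. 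Reducing $\Sigma_k$ modulo $\mathfrak{P}$ via $\omega_{\mathfrak{P}}(x)\equiv x$, writing $\binom{T(x)}{k}$ as the degree--$k$ polynomial $t(t-1)\cdots(t-k+1)/k!$ evaluated at $\Trace(x)=x+x^p+\cdots+x^{p^{f-1}}$, and applying the orthogonality relation $\sum_{x\in\F_q^*}x^{j}\equiv -1$ or $0$ according as $(q-1)\mid j$ (with $j\geq 0$) or not, the uniqueness of base--$p$ expansions forces $\Sigma_k\equiv 0\pmod{\mathfrak{P}}$ for $k<b$ and $\Sigma_b\equiv -1/b!\pmod{\mathfrak{P}}$. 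Together with the bounds $\nu_{\cP}(\pi^k\Sigma_k)\geq k$, and $\geq k+(p-1)$ whenever $\mathfrak{P}\mid\Sigma_k$, this isolates $\pi^b\Sigma_b$ as the term of least $\cP$--value, yielding both the stated congruence and $\nu_{\cP}(g(\omega_{\mathfrak{P}}^{-b}))=b$.

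For a general $a$, the relation $g(\chi^p)=g(\chi)$ gives $g(\omega_{\mathfrak{P}}^{-a_ip^i})=g(\omega_{\mathfrak{P}}^{-a_i})$, and telescoping with $g(\chi)g(\chi')=J(\chi,\chi')g(\chi\chi')$ yields
\[
\prod_{i=0}^{f-1}g(\omega_{\mathfrak{P}}^{-a_i})=\Bigl(\prod_{i=1}^{f-1}J_i\Bigr)\,g(\omega_{\mathfrak{P}}^{-a}),\qquad J_i=J\bigl(\omega_{\mathfrak{P}}^{-A_i},\,\omega_{\mathfrak{P}}^{-a_ip^i}\bigr),\quad A_i=a_0+\cdots+a_{i-1}p^{i-1}.
\]
Each $J_i\in\Z[\xi_{q-1}]$ has nonnegative $\mathfrak{P}$--value, so the single--digit case gives $\nu_{\cP}(g(\omega_{\mathfrak{P}}^{-a}))\leq s(a)$; applying this also to $q-1-a$ and combining with $g(\chi^{-1})=\chi(-1)\overline{g(\chi)}$, $g(\chi)\overline{g(\chi)}=q$, and $s(a)+s(q-1-a)=f(p-1)$ forces equality $\nu_{\cP}(g(\omega_{\mathfrak{P}}^{-a}))=s(a)$ and makes each $J_i$ a $\cP$--unit. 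Reducing $J_i$ modulo $\mathfrak{P}$ exactly as before gives $J_i\equiv -(-1)^{A_i}\binom{q-1-a_ip^i}{A_i}$, and Lucas' theorem evaluates $\binom{q-1-a_ip^i}{A_i}\equiv\prod_{\ell<i}\binom{p-1}{a_\ell}\equiv(-1)^{a_0+\cdots+a_{i-1}}$, so $J_i\equiv -1\pmod{\cP}$. Multiplying the $f$ single--digit congruences (whose product is $\equiv(-1)^f\pi^{s(a)}/t(a)$ modulo $\cP^{s(a)+1}$, the error terms having $\cP$--value $\geq s(a)+1$) and dividing by the unit $\prod_{i=1}^{f-1}J_i\equiv(-1)^{f-1}$, one obtains $g(\omega_{\mathfrak{P}}^{-a})\equiv -\pi^{s(a)}/t(a)\pmod{\cP^{s(a)+1}}$, as claimed.

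The main obstacle is the single--digit step: pinning down the exact leading term of $g(\omega_{\mathfrak{P}}^{-b})$ means showing simultaneously that every summand $\pi^k\Sigma_k$ with $k<b$ vanishes modulo $\cP^{b+1}$---which forces the character sum over $\F_q^*$ to annihilate all monomials in $\Trace(x)$ of the wrong degree---and that the $k=b$ term carries coefficient exactly $-1/b!$. Both facts rest on the orthogonality of characters of $\F_q^*$ together with Lucas--style control of binomial coefficients modulo $p$, and this is precisely where the base--$p$ digits of $a$, hence the shapes of $s(a)$ and $t(a)$, enter. The only other delicate point, $J_i\equiv -1\pmod{\cP}$, reduces to the elementary congruence $\binom{p-1}{k}\equiv(-1)^k\pmod p$.
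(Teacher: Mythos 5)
The paper does not actually prove Theorem~\ref{stick}: it is Stickelberger's classical congruence, quoted directly from \cite{bew} (p.~344), so there is no in-paper argument to compare against. Judged on its own and against the standard treatments, your proof is correct in substance and follows the classical digit-by-digit route: the single-digit congruence via $\psi(x)=(1+\pi)^{T(x)}$, orthogonality of $\omega_{\mathfrak P}^{-b}$ and the uniqueness of base-$p$ representations; then $g(\chi^p)=g(\chi)$ and telescoping with Jacobi sums over the digits. Your one genuinely nice variation is obtaining the exact valuation $\nu_{\cP}(g(\omega_{\mathfrak P}^{-a}))=s(a)$ (and hence that each $J_i$ is a $\cP$-unit) from the sandwich $s(a)+s(q-1-a)=f(p-1)$ combined with $g(\chi)g(\chi^{-1})=\chi(-1)q$, instead of tracking valuations through an induction; the Lucas computation giving $J_i\equiv-1\pmod{\cP}$ also checks out. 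Three small points need patching: (i) when a digit $a_i=0$, or a partial sum $A_i=0$, one of the characters in $g(\chi)g(\chi')=J(\chi,\chi')g(\chi\chi')$ is trivial and the identity does not apply as stated; simply omit the zero digits from the telescoping, which is harmless since $g(\chi_0)=-1=-\pi^{0}/0!$ and such digits contribute nothing to $s(a)$ or $t(a)$. (ii) In the single-digit case with $b=p-1$, your bound for the $k=0$ term gives only $\nu_{\cP}\geq p-1=b$, not $b+1$; use instead that $\Sigma_0=\sum_{x\in\F_q^*}\omega_{\mathfrak P}^{-b}(x)=0$ exactly, by orthogonality. (iii) The sign step $(-1)^{A_i}=(-1)^{a_0+\cdots+a_{i-1}}$ uses that $p$ is odd (for $p=2$ all signs are trivial modulo $\cP$), which deserves one sentence. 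With these minor repairs the argument is a complete and essentially standard proof of the cited theorem.
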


\section{Cyclotomic Constructions of Difference Sets}

We first recall a well-known lemma in the theory of difference sets.

\begin{lemma}\label{lem1}
Let $G$ be an abelian group of order $v$, $D$ be a subset of $G$ of size $k$, and let $\lambda$ be a positive integer. Then $D$ is a
$(v,k,\lambda)$ difference set in $G$ if and only if
$$\chi(D)\overline{\chi(D)}= {k-\lambda}$$
for every nontrivial complex character $\chi$ of $G$. Here, $\chi(D)$ stands for $\sum _{d\in D} ^{}\chi(d)$.  Moreover suppose that $D\cap D^{(-1)}=\emptyset$ and $1\not\in D$. Then $D$ is a skew Hadamard difference set in $G$ if and only if
\begin{equation}\label{skew}
\chi(D)=\frac {-1\pm \sqrt {-v}}{2}
\end{equation}
for every nontrivial complex character $\chi$ of $G$. 
\end{lemma}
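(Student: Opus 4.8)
The plan is to work in the group ring $\Z[G]$ and convert everything into statements about characters via Fourier analysis. Identify $D$ with $\sum_{d\in D}d\in\Z[G]$ and set $D^{(-1)}=\sum_{d\in D}d^{-1}$. Then the assertion that $D$ is a $(v,k,\lambda)$ difference set is precisely the identity $D\,D^{(-1)}=(k-\lambda)\cdot 1_G+\lambda\,G$ in $\Z[G]$: the coefficient of $1_G$ on the left is $|D|=k=(k-\lambda)+\lambda$, and for $g\neq 1_G$ the coefficient of $g$ is the number of ways of writing $g=d_1d_2^{-1}$ with $d_i\in D$. For the forward direction I would apply an arbitrary nontrivial character $\chi$ of $G$: since $\chi(G)=0$ and $\chi(D^{(-1)})=\overline{\chi(D)}$ (because each $\chi(d)$ is a root of unity), the identity yields $\chi(D)\overline{\chi(D)}=k-\lambda$.

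For the converse I would use the orthogonality relations for the characters of $G$ (the inversion formula, cf.\ (\ref{inv})): for each $g\in G$ the coefficient $c_g$ of $g$ in $D\,D^{(-1)}$ equals $\tfrac1v\sum_{\chi\in\widehat G}\chi(D)\overline{\chi(D)}\,\overline{\chi(g)}$. Substituting $\chi_0(D)\overline{\chi_0(D)}=k^2$, $\chi(D)\overline{\chi(D)}=k-\lambda$ for $\chi\neq\chi_0$, and $\sum_{\chi\neq\chi_0}\overline{\chi(g)}=-1$ for $g\neq 1_G$, one finds $c_{1_G}=\tfrac1v\bigl(k^2+(v-1)(k-\lambda)\bigr)$ and $c_g=\tfrac1v\bigl(k^2-(k-\lambda)\bigr)$ for $g\neq 1_G$. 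Since $c_{1_G}=k$ holds trivially, comparing the two expressions for $c_{1_G}$ forces the feasibility relation $k(k-1)=\lambda(v-1)$; feeding this back into the formula for $c_g$ gives $c_g=\lambda$ for every $g\neq 1_G$, i.e.\ $D$ is a $(v,k,\lambda)$ difference set.

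For the skew Hadamard statement, observe that $G=D\cup D^{(-1)}\cup\{1\}$ being a disjoint union is exactly the identity $D+D^{(-1)}+1_G=G$ in $\Z[G]$, and that under the standing hypotheses $D\cap D^{(-1)}=\emptyset$, $1_G\notin D$ (hence also $1_G\notin D^{(-1)}$) the element $D+D^{(-1)}+1_G$ is automatically a $\{0,1\}$-combination of group elements. For ``only if'': a skew Hadamard difference set has $v=2k+1$, so $k=(v-1)/2$ and (from $k(k-1)=\lambda(v-1)$) $\lambda=(v-3)/4$, whence $k-\lambda=(v+1)/4$; applying a nontrivial $\chi$ to $D+D^{(-1)}+1_G=G$ gives $\chi(D)+\overline{\chi(D)}=-1$, so $\operatorname{Re}\chi(D)=-1/2$, and combining with $|\chi(D)|^2=k-\lambda=(v+1)/4$ pins down $\chi(D)=\tfrac{-1\pm\sqrt{-v}}{2}$. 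For ``if'': if $\chi(D)=\tfrac{-1\pm\sqrt{-v}}{2}$ for every nontrivial $\chi$, then $\chi(D)+\overline{\chi(D)}+1=0$, so $\chi\bigl(D+D^{(-1)}+1_G\bigr)=0$ for all nontrivial $\chi$; hence $D+D^{(-1)}+1_G=c\,G$ in $\Q[G]$ with $c=(2k+1)/v$, and since this element has coefficients in $\{0,1\}$ with the coefficient of $1_G$ equal to $1$, we must have $c=1$, giving $G=D\sqcup D^{(-1)}\sqcup\{1\}$ and $v=2k+1$. Finally $|\chi(D)|^2=(v+1)/4$ for all nontrivial $\chi$, so the first part of the lemma shows $D$ is a $(v,k,\lambda)$ difference set with $k=(v-1)/2$, $\lambda=(v-3)/4$, completing the proof that $D$ is skew Hadamard.

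The argument is essentially routine Fourier analysis on a finite abelian group, so I do not expect a genuine obstacle; the only points that need a little care are (i) noticing that in the converse of the first part the comparison of the two expressions for the identity coefficient \emph{produces} the feasibility relation $k(k-1)=\lambda(v-1)$ rather than requiring it as an input, and (ii) in the skew Hadamard ``if'' direction, using the disjointness hypotheses $D\cap D^{(-1)}=\emptyset$ and $1_G\notin D$ to force the constant $c$ to equal exactly $1$.
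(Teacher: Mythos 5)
Your proposal is correct. Note that the paper does not prove this lemma at all---it is stated as a ``well-known lemma'' and used as a black box (it goes back to the standard character-theoretic criterion for difference sets, cf.\ Lander's book cited there)---so there is no in-paper argument to compare with; your group-ring plus Fourier-inversion proof is exactly the standard argument that underlies the citation, including the nice observation that the identity-coefficient comparison produces the feasibility relation $k(k-1)=\lambda(v-1)$ rather than assuming it. The only point worth making explicit in the skew Hadamard ``if'' direction is that $\lambda=(v-3)/4$ must be a (positive) integer before quoting the first part as stated; this is automatic, either because your coefficient computation shows every off-identity coefficient of $DD^{(-1)}$ equals $(v-3)/4$ and these coefficients are nonnegative integers by definition, or because $\chi(D)=\frac{-1\pm\sqrt{-v}}{2}$ is an algebraic integer, which forces $v\equiv 3\pmod 4$.
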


Paley type partial difference sets are counterparts of skew Hadamard difference sets. We give the definition of these sets here.  Let $G$ be a finite (multiplicative) group of order $v$. A $k$-element subset $D$ of $G$ is called a {\em $(v, k, \lambda, \mu)$ partial difference set} (PDS, in short) provided that the list of ``differences'' $xy^{-1}$, $x, y \in D$, $x \neq y$, contains each nonidentity element of $D$ exactly $\lambda$ times and each nonidentity element of $G \backslash D$ exactly $\mu$ times. Furthermore, assume that $v\equiv 1$ (mod 4). A subset $D$ of $G$, $1\not\in D$, is called a {\em Paley type PDS} if $D$ is a $(v,\frac {v-1}{2}, \frac{v-5}{4}, \frac{v-1}{4})$ PDS. The set of nonzero squares in $\F_q$, $q\equiv 1$ (mod 4), is an example of Paley type PDS, which is usually called {\it the Paley PDS} in $\F_q$. The strongly regular Cayley graph constructed from the Paley PDS is the Paley graph. 

All constructions in this section are done in the following specific index 2 case: $N=2p_1^m$, $p_1>3$ is a prime, and $p_1\equiv 3$ (mod $4$); $p$ is a prime such that $\gcd(p,N)=1$, $-1\not\in \langle p\rangle\subset (\Z/N\Z)^*$, and $[(\Z/N\Z)^*:\langle p\rangle]=2$ (that is, $f:={\rm ord}_N(p)=\phi(N)/2$).

\subsection{The $p_1\equiv 7$ (mod $8$) case} We first give a construction of skew Hadamard difference sets in the case where $p_1\equiv 7$ (mod $8$). Let $p$ be a prime such that $\gcd(p,N)=1$. Write $f:={\rm ord}_N(p)=\phi(N)/2$ (so $N|(p^f-1)$). Let $E=\F_{q^s}$ be an extension field of $\F_q$, where $q=p^f$. Let $\gamma$ be a fixed primitive element of $E$, let $C_0=\langle \gamma^N\rangle$ and $C_i=\gamma^iC_0$ for $1\leq i\leq N-1$.

\begin{thm}\label{7mod8}
Assume that we are in the index 2 case as specified above, and $E=\F_{q^s}$ with $s$ odd. Let $I$ be any subset of $\Z/N\Z$ such that $\{i\pmod{p_1^m}\mid i \in I\}=\Z/p_1^m\Z$,  and let $D=\cup_{i\in I}C_i$. Then $D$ is a skew Hadamard difference set in $(E,+)$ if $p\equiv 3\pmod{4}$ and $D$ is a Paley type PDS if $p\equiv 1\pmod{4}$.
\end{thm}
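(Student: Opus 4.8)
Throughout write $v=q^s=p^{fs}$, $\zeta=\chi(\gamma)$ (a primitive $N$th root of unity), and for $a\in E^*$ let $\psi_a(D)=\sum_{x\in D}\psi(ax)$. Since the nontrivial additive characters of $(E,+)$ are exactly $x\mapsto\psi(ax)$ with $a\in E^*$, by Lemma~\ref{lem1} (and its well-known counterpart for Paley type partial difference sets) it suffices to prove that
\[
\psi_a(D)=\frac{-1\pm\sqrt{(p^*)^{fs}}}{2}\qquad\text{for every }a\in E^*,
\]
together with the combinatorial hypotheses. Note that $f=\phi(N)/2=p_1^{m-1}(p_1-1)/2$ is odd (as $p_1\equiv3\pmod4$) and $s$ is odd, so $(p^*)^{fs}=-v$ when $p\equiv3\pmod4$ and $(p^*)^{fs}=v$ when $p\equiv1\pmod4$; thus the displayed value is the value required by Lemma~\ref{lem1} in the first case and the value required for a Paley type PDS in the second. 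I would dispose of the combinatorial hypotheses first: $0\notin D$ is clear and $|D|=|I|\cdot\frac{q^s-1}{N}=\frac{q^s-1}{2}$ since $|I|=N/2=p_1^m$. A short computation of $\chi(-1)=\zeta^{(q^s-1)/2}$ shows $\chi(-1)=-1$ precisely when $(q^s-1)/N$ is odd, i.e.\ precisely when $p\equiv3\pmod4$; hence $-1\in C_0$ (so $-D=D$) when $p\equiv1\pmod4$, while $-1\in C_{N/2}$ when $p\equiv3\pmod4$, in which case $-D=\bigcup_{i\in I}C_{i+N/2}$ and $D\cap(-D)=\emptyset$ because $I$ is a transversal of $p_1^m=N/2$ in $\Z/N\Z$, so $I\cap(I+N/2)=\emptyset$.

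Next I would expand $\psi_a(D)$ in Gauss sums. Writing the indicator function of $C_0$ as $\frac1N\sum_{j=0}^{N-1}\chi^j$ and using $\sum_{x\in E^*}\chi^j(x)\psi(ax)=\bar\chi^j(a)\,g_E(\chi^j)$ gives, with $\tau_j:=\sum_{i\in I}\zeta^{-ij}$,
\[
\psi_a(D)=-\frac12+\frac1N\sum_{j=1}^{N-1}\tau_j\,\bar\chi^j(a)\,g_E(\chi^j).
\]
The crucial observation is that the hypothesis $\{i\bmod p_1^m\mid i\in I\}=\Z/p_1^m\Z$ forces $\tau_j=0$ for \emph{every even} $j$: writing $j=2j'$ with $1\le j'\le p_1^m-1$, we have $\zeta^{-ij}=(\zeta^2)^{-ij'}$ and $\zeta^2$ is a primitive $p_1^m$th root of unity, so summing over a complete set of residues mod $p_1^m$ annihilates $\tau_j$. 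Since the ``$\sqrt{-p_1}$-type'' Gauss sums in Theorem~\ref{gs} are precisely the $g_E(\chi^{2p_1^t})$, attached to even exponents, these never appear in the sum. Thus only odd $j$ survive, and for odd $j$ the character $\chi^j$ has order $2p_1^{m-t}$ where $t$ is the exponent of $p_1$ in $j$.

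For the surviving odd $j$ I would first reduce $g_E(\chi^j)$ to a Gauss sum over $\F_q$ via the Davenport--Hasse lifting relation: every character of $E^*$ of order dividing $N$ is lifted from $\F_q^*$ along the norm, and $g_E(\rho\circ N_{E/\F_q})=(-1)^{s-1}g_{\F_q}(\rho)^s=g_{\F_q}(\rho)^s$ since $s$ is odd. Now I claim all these $g_{\F_q}(\cdot)$ collapse to a single value. When $p\equiv3\pmod4$, parts (1) and (3) of Theorem~\ref{gs} give $g_{\F_q}(\chi^{p_1^t})=(-1)^m\sqrt{-p}\,p^{(f-1)/2}$ for $0\le t\le m-1$ and $g_{\F_q}(\chi^{p_1^m})=(-1)^{(f-1)/2}\sqrt{-p}\,p^{(f-1)/2}$; using $p_1\equiv7\pmod8$ one checks $(-1)^m=(-1)^{(f-1)/2}$, so these representatives all give one value $G$. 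For the remaining odd exponents one uses $g(\rho^p)=g(\rho)$ within each $\langle p\rangle$-orbit, together with the fact that the index-2 hypothesis makes $\langle p\rangle$ the group of squares modulo $p_1^m$, hence (by reduction) modulo each $p_1^{m-t}$, so there are exactly two $\langle p\rangle$-orbits among the odd $j$ with $v_{p_1}(j)=t$, represented by $p_1^t$ and $-p_1^t$; the conjugate formula \eqref{conjugate} then gives $g_{\F_q}(\chi^{-p_1^t})=\chi^{p_1^t}(-1)\overline{g_{\F_q}(\chi^{p_1^t})}=G$ as well. Hence $g_E(\chi^j)=G^s=\pm\sqrt{-v}$ for all odd $j$. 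When $p\equiv1\pmod4$, Theorem~\ref{gs} only furnishes the order-$N$ value directly, so I would descend further by Davenport--Hasse to the minimal subfield $\F_{p^{f'}}$ carrying an order-$2p_1^{m-t}$ character, re-apply Theorem~\ref{gs} (still Case D, still $p_1\equiv7\pmod8$ — the index-2 condition is inherited because reduction sends squares onto squares), and lift back; the net effect is that every surviving $g_E(\chi^j)$ equals $+\sqrt{v}$.

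In both cases, then, all surviving Gauss sums equal a single number $\theta=\pm\sqrt{(p^*)^{fs}}$, so
\[
\psi_a(D)=-\frac12+\frac{\theta}{N}\sum_{\substack{1\le j\le N-1\\ j\ \mathrm{odd}}}\tau_j\,\bar\chi^j(a).
\]
Writing $a\in C_\ell$ and interchanging the two sums, $\sum_{j\ \mathrm{odd}}\tau_j\bar\chi^j(a)=\sum_{i\in I}\sum_{j\ \mathrm{odd}}\zeta^{-j(i+\ell)}$; the inner sum equals $p_1^m(-1)^{(i+\ell)/p_1^m}$ when $p_1^m\mid(i+\ell)$ and $0$ otherwise, and since $I$ is a transversal mod $p_1^m$ there is exactly one $i\in I$ with $p_1^m\mid(i+\ell)$. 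Hence $\sum_{j\ \mathrm{odd}}\tau_j\bar\chi^j(a)=\pm p_1^m=\pm N/2$, giving $\psi_a(D)=\tfrac{-1\pm\theta}{2}=\tfrac{-1\pm\sqrt{(p^*)^{fs}}}{2}$, which is exactly what is needed. By Lemma~\ref{lem1} this produces a skew Hadamard difference set when $p\equiv3\pmod4$, and by the analogous characterization a Paley type PDS when $p\equiv1\pmod4$. The step I expect to be the main obstacle is the one in the third paragraph — forcing \emph{every} surviving Gauss sum to be literally one and the same number: this requires the Davenport--Hasse relations, the explicit Case-D evaluations of Theorem~\ref{gs} (with the $p\equiv1$ subcase needing a descent to the minimal subfield and a re-application), the conjugate relation \eqref{conjugate} to merge the two $\langle p\rangle$-cosets, and a careful sign analysis in which $p_1\equiv7\pmod8$ is used essentially.
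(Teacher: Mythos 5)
Your proposal is correct and follows essentially the same route as the paper: expand $\psi_a(D)$ via the Fourier/Gauss-sum inversion, kill the even-exponent terms using the transversal condition on $I$, show all odd-exponent Gauss sums over $\F_q$ coincide via Theorem~\ref{gs} (with the $(-1)^{(f-1)/2}=(-1)^m$ check from $p_1\equiv 7\pmod 8$), relations (\ref{pthpower}) and (\ref{conjugate}), lift by Davenport--Hasse, and evaluate the remaining exponential sum using the unique $i_a\in I$ with $p_1^m\mid(a+i_a)$. Your subfield-descent sketch for the $p\equiv 1\pmod 4$ case is a reasonable way to fill in what the paper dismisses as ``similar,'' but it is a detail, not a different method.
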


\begin{proof} We shall only give the proof in the case where $p\equiv 3$ (mod $4$). The proof in the case where $p\equiv 1$ (mod $4$) is similar. First, we note that since $p\equiv 3$ (mod $4$) and $s$ is odd, we have $-1\in C_{p_1^m}$. By the choice of $I$, we have $-D\cap D=\emptyset$. Secondly, observe that since $p_1\equiv 7\pmod{8}$, we have $f-1=\frac{p_1-1}{2}p_1^{m-1}-1\equiv (-1)^m-1\pmod{4}$. Therefore $(-1)^{(f-1)/2}=(-1)^m$. Thirdly, let $\eta$ be any character of $\F_q^*$ of order $N$. By Theorem~\ref{gs} and the second observation, we have that for every $0\leq t\leq m$, $g_{_{\F_q}}(\eta^{p_1^t})=(-1)^{m}p^{(f-1)/2}\sqrt{-p}$; so by (\ref{conjugate}), 
$$g_{_{\F_q}}(\eta^{-p_1^t})=\eta^{p_1^t}(-1)\overline{g_{_{\F_q}}(\eta^{p_1^t})}=(-1)^{m}p^{(f-1)/2}\sqrt{-p}.$$ Now by the index 2 assumption, any integer in the set $\{i\mid 1\leq i\leq N-1, \gcd(i,N)=1\}$ is congruent (modulo $N$) to an element in $\langle p\rangle$ or an element in $-\langle p\rangle$. Therefore all odd integers in the interval $[1, N-1]$ are congruent to $\pm p^rp_1^t$ modulo $N$. Using (\ref{pthpower}) and the third observation above, with $\eta$ being any character of $\F_q^*$ of order $N$, we have
$$g_{\atop{\F_q}}(\eta^{-j})=(-1)^{m}p^{(f-1)/2}\sqrt{-p}$$
for all odd integers $j$, $1\leq j\leq N-1$. 

Now let $\chi$ be an arbitrary character of $E^*$ of order $N$. Since $N|(q-1)$, $\chi$ is the lift of some character $\eta$ of $\F_q^*$  and $o(\eta)=N$ (see \cite[Theorem 11.4.4]{bew}). Following the notation of \cite{bew}, we write $\chi=\eta'$. It follows that $\chi^j=(\eta^j)'$ for all $1\leq j\leq N-1$. Using the Davenport-Hasse theorem on lifted Gauss sums (\cite[p.~360]{bew}), we have that for all odd integers $j$, $1\leq j\leq N-1$,
\begin{equation}\label{odd}
g_{_{E}}(\chi^{-j})=(-1)^{s-1}g_{\atop{\F_q}}(\eta^{-j})^s=(-1)^mp^{\frac{s(f-1)}{2}}(\sqrt{-p})^s.
\end{equation}

We will prove the result stated in the theorem by using the second part of Lemma~\ref{lem1}. To this end, let $a$ be an arbitrary integer such that $0\leq a\leq N-1$ and let $\psi$ be the additive character of $E$ defined as in (\ref{defaddchar}), with $\F_q$ replaced by $E$. We compute 

\begin{align*}
\psi(\ga^a D)&=\sum_{i\in I}\psi(\ga^a C_i)\\
                    &=\frac{1}{N}\sum_{i\in I}\sum_{x\in E^*}\psi(\ga^{a+i}x^N)\\
                    &=\frac{1}{N}T_a,
\end{align*}
where
\[T_a=\sum_{\theta\in C_0^{\perp}}g_{_{E}}({\bar \theta})\sum_{i\in I}\theta(\ga^{a+i}).\]
Here $C_0^{\perp}$ is the unique subgroup of order $N$ of ${\widehat E^*}$. Note that in the last step of the above calculations we have used (\ref{inv}).

We now proceed to computing the sum $T_a$. If $\theta\in C_0^{\perp}$ and  $o(\theta)=1$, then $g_{_E}({\bar \theta})=-1$, and $\sum_{i\in I}\theta(\ga^{a+i})=p_1^m$. If $\theta\in C_0^{\perp}$, $o(\theta)\neq 1$, and $o(\theta)$ is odd, then  $\sum_{i\in I}\theta(\ga^{a+i})=\sum_{i=0}^{p_1^m-1}\theta(\ga^{a+i})=\theta(\ga^a)\frac{\theta(\ga)^{p_1^m}-1}{\theta(\ga)-1}=0$. Therefore, with $\chi$ a fixed generator of $C_0^{\perp}$, we have
$$T_a=-p_1^m+\sum_{j\, {\rm odd},\; 1\leq j\leq N-1}g_{_{E}}(\chi^{-j})\sum_{i\in I}\chi^j(\ga^{a+i}).$$
Using (\ref{odd}) and writing $\xi_{N}$ for $\chi(\ga)$, a complex primitive $N$th root of unity, we have
\begin{align*}
T_a=&-p_1^m+(-1)^m p^{s(f-1)/2}(\sqrt{-p})^s\sum_{u=0}^{p_1^m-1}\sum_{i\in I}\chi^{1+2u}(\ga^{a+i})\\
=&-p_1^m+(-1)^m p^{s(f-1)/2}(\sqrt{-p})^s\sum_{i\in I}\xi_N^{a+i}\left(\sum_{u=0}^{p_1^m-1}\xi_{p_1^m}^{u(a+i)}\right)
\end{align*}
For each $a$, $0\leq a\leq N-1$, there is a unique $i_a\in I$ such that $p_1^m|(a+i_a)$.  Write $a+i_a=p_1^mj_a$ for some integer $j_a$. Then 
$$T_a=-p_1^m+(-1)^{m}p^{s(f-1)/2}(\sqrt{-p})^s(-1)^{j_a}p_1^m.$$ 
It follows that
$$\psi(\ga^a D)=\frac{-1+(-1)^{m+j_a}\sqrt{-p^{sf}}}{2}.$$
By Lemma~\ref{lem1}, $D$ is a skew Hadamard difference set in $(E,+)$. The proof is now complete.
\end{proof}

\begin{example}\label{7and11} {\em Let $p_1=7$, $N=14$, $p=11$.  Then it is routine to check that ${\rm ord}_N(p)=3=\phi(N)/2$. Let $C_i$, $0\leq i\leq 13$, be the cyclotomic classes of order $14$ of $\F_{11^3}$.  

(1) Take $I=\{0,1,\ldots,6\}$. Then by Theorem~\ref{7mod8},  $D=C_0\cup C_1\cup \cdots \cup C_6$ is a skew Hadamard difference set in $(\F_{11^3},+)$.  Let $\Dev(D)$ denote the symmetric design developed from the difference set $D$. One can use a computer to find that $\Aut(\Dev(D))$ has size $5\cdot 11^3\cdot 19$. 

(2) Take $I=\{ 0, 1, 3, 4, 5, 6, 9\}$. Then by Theorem~\ref{7mod8}, $D'=C_0\cup C_1\cup C_3\cup C_4\cup C_5\cup C_6\cup C_9$ is also a skew Hadamard difference set in $(\F_{11^3},+)$. One finds by using a computer that $\Aut(\Dev(D'))$ has size $3\cdot5\cdot 11^3\cdot 19$. 

The automorphism group of the Paley design has size $3\cdot 5\cdot 7\cdot 11^3\cdot 19$. So the three difference sets $D$, $D'$ and the Paley difference set in $(\F_{11^3},+)$ are pairwise inequivalent. Also note that the sizes of the Sylow $p$-subgroups of $\Aut(\Dev(D))$ and $\Aut(\Dev(D'))$ are $q=11^3$,  while the size of the Sylow $p$-subgroups of the automorphism groups of the designs developed from the difference sets constructed by Muzychuk \cite{muzy} is strictly greater than $q$, we conclude that both $D$ and $D'$ are inequivalent to the corresponding skew Hadamard difference sets in \cite{muzy}.}
\end{example}

\begin{remark}
{\em (1) The automorphism group of the Paley design is determined in \cite{kantor}. Our construction in Theorem~\ref{7mod8} includes the Paley construction as a special case. It seems difficult to generalize the method in \cite{kantor} to determine the automorphism groups of the designs developed from our difference sets. Based on some computational evidence, we conjecture that $\Aut(\Dev(D))$, with $D=\cup_{i\in I}C_i$ as given in the statement of the theorem, is generated by the following three types of elements: (i) translations by elements of $\F_q$, (ii) multiplications by elements in $C_0$, and (iii) $\sigma_p^i$, $p^iI=I$, where $\sigma_p$ is the Frobenius automorphism of the finite field $\F_{p^{sf}}$.

(2) Let $N=2\cdot 7^m$, where $m\geq 2$. One can use induction to prove that ${\rm ord}_N(11)=3\cdot 7^{m-1}=\phi(N)/2$. Therefore the conditions of Theorem~\ref{7mod8} are satisfied. So the examples in Example~\ref{7and11} can be generalized into infinite families.}
\end{remark}

\subsection{The $p_1\equiv 3\pmod{8}$ case} We will again do the constructions in the index 2 case as specified at the beginning of this section. In addition, we will assume that
\begin{enumerate}

\item  $p_1\equiv 3\pmod{8}$, ($p_1\neq 3$),

\item  $N=2p_1$,

\item  $1+p_1=4p^h$, where $h$ is the class number of $\Q(\sqrt{-p_1})$,

\item  $p\equiv 3\pmod{4}$.

\end{enumerate}
Let $q=p^f$, $f={\rm ord}_N(p)=\phi(N)/2$. As in Section 2, let $\xi_{q-1}$ be a primitive complex $(q-1)$th root of unity, and ${\mathfrak P}$ be a prime ideal in $\Z[\xi_{q-1}]$ lying over $p$. Then $\Z[\xi_{q-1}]/\mathfrak{P}$ is a finite field of order $q$. We will use $\Z[\xi_{q-1}]/\mathfrak{P}$ as a model for $\F_q$. That is, 
\begin{equation}\label{model}
\F_q=\{{\overline 0}, {\overline 1}, {\overline \xi_{q-1}},  {\overline \xi_{q-1}^2}, \ldots , {\overline \xi_{q-1}^{q-2}}\}, 
\end{equation}
where ${\overline \xi_{q-1}}=\xi_{q-1} \pmod{{\mathfrak P}}$. Hence $\gamma:={\overline \xi_{q-1}}$ is a primitive element of $\F_q$. Let $\omega_{\mathfrak P}$ be the Teichm\"uller character of $\F_q$. Then 
$$\omega_{\mathfrak P}(\gamma)=\xi_{q-1}.$$
Let $\chi=\omega_{\mathfrak P}^{(q-1)/N}$. Then $\chi$ is a character of $\F_q^*$ of order $N=2p_1$ (and $\chi$ depends on the choice of ${\mathfrak P}$). To simplify notation, we write $\xi_N$ for $\chi(\gamma)=\xi_{q-1}^{(q-1)/N}$ and  $\xi_{p_1}$ for $\chi^2(\gamma)=\xi_{q-1}^{\frac{q-1}{p_1}}$. Next let $n=(q-1)/p_1$. By the result in \cite{Lang} (see also \cite[p.~376]{bew}), we have 

(1) $s(-n)=(p-1)b_0$, $s(n)=(p-1)b_1$ for some positive integers $b_0,b_1$, and $b_0>b_1=\frac{f-h}{2}$, 

(2) $g(\chi^2)=p^{\frac{f-h}{2}}\frac{(b+c\sqrt{-p_1})}{2}$, where $b, c$ are integers satisfying  (i) $b,c\not\equiv 0$ (mod $p$),
(ii) $b^2+p_1c^2=4p^{h}$,
(iii) $bp^{\frac {f-h}{2}}\equiv -2$ (mod $p_1$). 

By the assumption that $1+p_1=4p^h$, we must have $b,c\in \{1,-1\}$. The sign of $b$ is determined by the congruence in (iii).  The sign of $c$ depends on the choice of ${\mathfrak P}$. We have the following:

\begin{lemma}\label{signbc}
With notation as above, $bc\equiv -\sqrt{-p_1} \pmod {\mathfrak P}.$
\end{lemma}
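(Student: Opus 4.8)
The plan is to extract the congruence $bc\equiv -\sqrt{-p_1}\pmod{\mathfrak P}$ from the Stickelberger congruence (Theorem~\ref{stick}) applied to the Gauss sum $g(\chi^2)=g(\omega_{\mathfrak P}^{-n})$, where $n=(q-1)/p_1$, combined with the explicit evaluation $g(\chi^2)=p^{(f-h)/2}\frac{b+c\sqrt{-p_1}}{2}$ from item (2) above. The key point is that both expressions for $g(\chi^2)$ live in $\Z[\xi_{q-1},\xi_p]$, and we can read them modulo $\cP$ after dividing out the exact power of $\cP$ that divides them. By item (1), $s(-n)=(p-1)b_1$ with $b_1=\frac{f-h}{2}$, so $\nu_\cP(g(\chi^2))=(p-1)\cdot\frac{f-h}{2}$. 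On the other hand $\nu_\cP(p)=p-1$ (since $p$ is totally ramified in $\Z[\xi_p]$ with $\pi=\xi_p-1$ a uniformizer and $\nu_\cP(\pi)=1$), so $\nu_\cP\bigl(p^{(f-h)/2}\bigr)=(p-1)\frac{f-h}{2}$ as well, confirming consistency and telling us that $\frac{b+c\sqrt{-p_1}}{2}$ is a $\cP$-unit.

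First I would write out the Stickelberger congruence for $a=n$: the $p$-adic digits of $n=(q-1)/p_1$ sum to $s(n)=(p-1)b_1$, and Theorem~\ref{stick} gives $g(\omega_{\mathfrak P}^{-n})\equiv -\pi^{s(n)}/t(n)\pmod{\cP^{s(n)+1}}$. Since $s(n)=\nu_\cP(g(\chi^2))$, dividing by $\pi^{s(n)}$ and reducing mod $\cP$ yields $g(\chi^2)/\pi^{s(n)}\equiv -1/t(n)\pmod{\cP}$, where $t(n)$ is a product of factorials of digits, each at most $(p-1)!$, hence a unit mod $\cP$ with a controllable value. Second, I would equate this with the other formula: $g(\chi^2)/\pi^{s(n)} = \bigl(p^{(f-h)/2}/\pi^{s(n)}\bigr)\cdot\frac{b+c\sqrt{-p_1}}{2}$, and since $p=-\pi^{p-1}u$ for an explicit unit $u$ (coming from $p = \prod_{j=1}^{p-1}(1-\xi_p^j)$ up to a unit, or more simply $p/\pi^{p-1}\equiv (p-1)!\equiv -1\pmod\cP$ by Wilson), the factor $p^{(f-h)/2}/\pi^{s(n)}$ is a known unit mod $\cP$. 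Comparing the two then pins down $\frac{b+c\sqrt{-p_1}}{2}\pmod\cP$ in terms of $t(n)$ and these Wilson-type units.

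Third — and this is where the actual content of the lemma comes in — I would combine the above with congruence (iii), $bp^{(f-h)/2}\equiv -2\pmod{p_1}$, reinterpreted $\cP$-adically: since $\cP$ lies over both $p$ and (a prime above) $p_1$ in $\Z[\xi_{q-1},\xi_p]$, and $\sqrt{-p_1}\equiv 0\pmod{\text{(prime over }p_1)}$ is \emph{not} what we want here; rather, the relation $b^2+p_1c^2=4p^h$ from (ii) gives $b^2\equiv -p_1c^2\pmod p$, i.e. $(b/c)^2\equiv -p_1\pmod{\cP}$, so $b/c\equiv \pm\sqrt{-p_1}\pmod\cP$ for a suitable choice of the square root $\sqrt{-p_1}\in\Z[\xi_{q-1}]$ determined by $\mathfrak P$. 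Since $b,c\in\{1,-1\}$, we have $b/c=bc$, so $bc\equiv\pm\sqrt{-p_1}\pmod\cP$, and the remaining task is to fix the sign. Here I would feed in the explicit reduction of $g(\chi^2)$ mod $\cP$ obtained in the second step — the sign of $c$ (equivalently of $bc$) was noted above to depend precisely on the choice of $\mathfrak P$, and so does the sign of $\sqrt{-p_1}$, and the Stickelberger congruence is exactly the tool that couples these two choices together.

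The main obstacle I anticipate is the sign bookkeeping in the last step: tracking how the choice of $\mathfrak P$ simultaneously fixes (a) the embedding $\xi_p\mapsto$ a specific root of unity so that $\pi=\xi_p-1$ and the Wilson-type units have definite residues, (b) the square root $\sqrt{-p_1}$ as an element of $\Z[\xi_{q-1}]$ reduced mod $\cP$, and (c) the sign of $c$ in $g(\chi^2)=p^{(f-h)/2}\frac{b+c\sqrt{-p_1}}{2}$. Getting all three consistently normalized so that the $\pm$ collapses to $-$ is delicate; I would handle it by computing everything symbolically mod $\cP$ in terms of a single generator and verifying the sign on a small case (e.g. $p_1=11$, the smallest prime $\equiv 3\pmod 8$ with $1+p_1=4p^h$, giving $p=3$, $h=1$) to make sure the normalization conventions match, then present the general argument. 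The divisibility facts ($\nu_\cP(g(\chi^2))=s(n)$, $b,c\in\{1,-1\}$, $(b/c)^2\equiv-p_1\pmod\cP$) are routine; only the sign deserves care.
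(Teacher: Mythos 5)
There is a genuine error at the foundation of your plan, and it sits exactly where the content of the lemma lies. You identify $g(\chi^2)$ with $g(\omega_{\mathfrak P}^{-n})$ and hence assign it the $\cP$-adic valuation $s(n)=(p-1)\tfrac{f-h}{2}$, concluding that $\tfrac{b+c\sqrt{-p_1}}{2}$ is a $\cP$-unit. But with the paper's normalization $\chi=\omega_{\mathfrak P}^{(q-1)/N}$ we have $\chi^2=\omega_{\mathfrak P}^{n}$, so in the notation of Theorem~\ref{stick} the relevant exponent is $a=-n$ and $\nu_{\cP}(g(\chi^2))=s(-n)=(p-1)b_0$; you have also swapped the two digit sums in item (1), which states $s(-n)=(p-1)b_0$ and $s(n)=(p-1)b_1$ with $b_0>b_1=\tfrac{f-h}{2}$. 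Because of the strict inequality $b_0>\tfrac{f-h}{2}$ (this is the nontrivial input, due to Langevin), the valuation of $g(\chi^2)$ strictly exceeds $\nu_{\cP}(p^{(f-h)/2})=(p-1)\tfrac{f-h}{2}$, so far from being a unit, the factor $\tfrac{b+c\sqrt{-p_1}}{2}$ must lie in $\cP$. That single observation already finishes the proof: $b+c\sqrt{-p_1}\equiv 0\pmod{\cP}$ gives $\sqrt{-p_1}\equiv -b/c=-bc\pmod{\cP}$ (since $b,c\in\{1,-1\}$), and since $bc+\sqrt{-p_1}\in\Z[\xi_{q-1}]$ one may intersect with $\Z[\xi_{q-1}]$ to obtain the congruence modulo $\mathfrak P$. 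This is precisely the paper's argument.

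By contrast, the later steps of your plan do not recover the lemma. The deduction from (ii) that $(b/c)^2\equiv -p_1\pmod{\cP}$, hence $bc\equiv\pm\sqrt{-p_1}\pmod{\cP}$, is essentially vacuous here: since $1+p_1=4p^h$, both $bc$ and $\sqrt{-p_1}$ reduce to $\pm1$ modulo $\cP$, so the entire content of the lemma is the sign, and your proposal explicitly leaves the sign to ``delicate bookkeeping'' verified on a small example rather than proving it. Worse, the mechanism you propose for fixing the sign --- comparing the residue of the (alleged) unit $\tfrac{b+c\sqrt{-p_1}}{2}$ against $-1/t(n)$ and Wilson-type units --- starts from the false unit claim above, so carried out correctly it would put $0\pmod{\cP}$ on one side and a unit on the other. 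The correct bookkeeping is the one-line valuation comparison; no residue computation or numerical check is needed.
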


\begin{proof} Let $\cP$ be the unique prime ideal of $\Z[\xi_{q-1},\xi_p]$ lying above ${\mathfrak P}$. Applying the Stickelberger congruence to $g(\chi^2)=g(\omega_{\mathfrak{P}}^n)$, we have
\begin{align*}
g(\chi^{2})=p^{\frac{f-h}{2}}\frac{(b+c\sqrt{-p_1})}{2}\equiv -\frac{\pi^{s(-n)}}{t(-n)}\pmod{\cP^{s(-n)+1}}.\\
\end{align*}
Now using the fact that $p=\prod_{i=1}^{p-1}(1-\xi_p^i)\equiv \pi^{p-1}\prod_{i=1}^{p-1}i\equiv -\pi^{p-1}\pmod{\cP^p}$, we can further simplify the above congruence to
\begin{align*}
b+c\sqrt{-p_1}&\equiv 2(-1)^{1+\frac {f-h}{2}}\frac{\pi^{(p-1)(b_0-(f-h)/2)}}{t(-n)}\equiv 0\pmod{\cP},\\
\end{align*}
where in the last step we have used the nontrivial fact that $b_0>b_1=\frac{f-h}{2}$. Therefore $bc\equiv -\sqrt{-p_1} \pmod \cP$. Since $\sqrt{-p_1}\in \Z[\xi_{p_1}]\subset \Z[\xi_{q-1}]$, we have $bc+\sqrt{-p_1}\in \cP\cap \Z[\xi_{q-1}]={\mathfrak P}$. That is, $bc\equiv -\sqrt{-p_1} \pmod {\mathfrak P}.$ The proof is complete. 
\end{proof}

Since $1+p_1=4p^h$, we have $(1+\sqrt{-p_1})(1-\sqrt{-p_1})\in {\mathfrak P}$ for any prime ideal ${\mathfrak P}$ of $\Z[\xi_{q-1}]$ lying above $p$. It follows that either $1+\sqrt{-p_1}\in {\mathfrak P}$ or $1-\sqrt{-p_1}\in {\mathfrak P}$. We can choose a prime ideal ${\mathfrak P}$ (and then fix this choice) such that 
\begin{equation}\label{choiceofprime}
1+\sqrt{-p_1}\in {\mathfrak P}.
\end{equation}
The corresponding $b,c$ in the evaluation of $g(\chi^2)$ will then satisfy $bc=1$ by Lemma~\ref{signbc}. These discussions were essentially done in \cite{yx2}. But there are a few minor problems in that paper. That is the reason why we gave the detailed account here.

By the index 2 assumption, we see that $\{i\pmod{p_1}\mid i\in\langle p\rangle\}$ is the set of nonzero squares of $\Z/p_1\Z$. Consequently, $\sum_{i\in\langle p\rangle}\xi_{p_1}^i=\frac{-1\pm \sqrt{-p_1}}{2}.$ It follows that $ 1+2\sum_{i\in\langle p\rangle}\xi_{p_1}^i\equiv \pm \sqrt{-p_1}\equiv \mp 1  \pmod {\mathfrak P}.$ Hence $1+2\sum_{i\in\langle p\rangle}\gamma^{in}=\mp 1$. Since $\sum_{i\in\langle p\rangle}\xi_{p_1}^i + \sum_{i\in\langle p\rangle}\xi_{p_1}^{-i}=-1$, we have 
$$(1+2\sum_{i\in\langle p\rangle}\gamma^{in})+(1+2\sum_{i\in\langle p\rangle}\gamma^{-in})=0.$$
So we can make a suitable choice of $\xi_{q-1}$ (that is, if necessary replace the originally chosen $\xi_{q-1}$ by $\xi_{q-1}^{-1}$) such that $1+2\sum_{i\in\langle p\rangle}\gamma^{in}=-1$.

We now give the construction of difference sets by using unions of cyclotomic classes. Let $\F_q$ be given as in (\ref{model}) with ${\mathfrak P}$ chosen in such a way that (\ref{choiceofprime}) holds, and $\gamma={\overline \xi_{q-1}}$ be the primitive element of $\F_q$ chosen above such that $1+2\sum_{i\in\langle p\rangle}\gamma^{in}=-1$. Let $N=2p_1$ and let $C_0=\langle \gamma^N\rangle$, and $C_i=\gamma^iC_0$ for $i=1,2, \ldots , N-1$,  be the cyclotomic classes of $\F_q$ of order $N$. Choose $I=\langle p\rangle\cup 2\langle p\rangle\cup\{0\}$, and define 
$$D:=\cup_{i\in I}C_i.$$ 

Note that $|I|=p_1$, and since $2$ is a quadratic nonresidue modulo $p_1$, we have $\{i\pmod{p_1}\mid  i\in I\}=\Z/p_1\Z$.

\begin{thm}\label{3mod8}
With the above definition,  $D$ is a skew Hadamard difference set in $(\F_q,+)$.
\end{thm}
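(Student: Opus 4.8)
The plan is to verify the skew Hadamard condition (\ref{skew}) of Lemma~\ref{lem1}, namely that $\psi(\gamma^a D) = \frac{-1 \pm \sqrt{-q}}{2}$ for every additive character $\psi(\gamma^a \cdot)$ with $0 \le a \le N-1$, after first confirming that $D \cap (-D) = \emptyset$ and $0 \notin D$. Since $p \equiv 3 \pmod 4$ and $N = 2p_1$ with $p_1 \equiv 3 \pmod 4$, one checks $-1 \in C_{p_1}$ and $p_1 \notin \{i \bmod p_1 : i \in I\}$... actually here $I = \la p\ra \cup 2\la p\ra \cup \{0\}$ reduces modulo $p_1$ to all of $\Z/p_1\Z$, so I must instead argue directly from the structure of $I$ that $-I \cap I = \emptyset$, using that $-1 \notin \la p\ra$ and the specific coset structure. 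Then, exactly as in the proof of Theorem~\ref{7mod8}, I would write
\[
\psi(\gamma^a D) = \frac{1}{N} T_a, \qquad T_a = \sum_{\theta \in C_0^\perp} g_E(\bar\theta) \sum_{i \in I} \theta(\gamma^{a+i}),
\]
and split the sum over $\theta \in C_0^\perp$ (a cyclic group of order $N = 2p_1$) according to the order of $\theta$: the trivial character contributes $-|I| = -p_1$; the characters of order $p_1$ (i.e.\ $\chi^{2t}$, $t \ne 0$) contribute a term governed by $g(\chi^2)$ and the Gauss sum relations (\ref{pthpower}); the character of order $2$ contributes via $g(\chi^{p_1})$; and the characters of order $2p_1$ contribute via $g(\chi)$ (odd powers of $\chi$).

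The key technical inputs are the explicit Gauss sum evaluations from Theorem~\ref{gs} (Case $p_1 \equiv 3 \pmod 8$, $m=1$), together with the sign normalizations established just before the statement: the choice of prime ideal ${\mathfrak P}$ with $1 + \sqrt{-p_1} \in {\mathfrak P}$ forcing $bc = 1$ (Lemma~\ref{signbc}), and the choice of $\xi_{q-1}$ making $1 + 2\sum_{i \in \la p\ra} \gamma^{in} = -1$. After invoking the Davenport--Hasse lifting to pass to $E = \F_{q^s}$ if one wants the more general statement (though as stated the theorem is over $\F_q$ itself, so this step may be unnecessary), the computation reduces to evaluating character sums of the form $\sum_{i \in I} \chi^j(\gamma^{a+i})$ for $j$ odd and for $j = 2t$. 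The sums $\sum_{i \in I} \chi^{2t}(\gamma^{a+i}) = \sum_{i \in I} \xi_{p_1}^{t(a+i)}$ are where the special shape $I = \la p\ra \cup 2\la p\ra \cup \{0\}$ and the quadratic-residue structure modulo $p_1$ enter: because $2$ is a nonresidue, $\{i \bmod p_1 : i \in I\}$ hits each residue class, but the multiplicities and the interaction with $\xi_{p_1}^{ta}$ must be tracked carefully.

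I expect the main obstacle to be bookkeeping the contribution of the order-$p_1$ characters, i.e.\ showing that the aggregate term $\sum_{t=1}^{p_1-1} g_E(\chi^{-2t}) \sum_{i \in I} \xi_{p_1}^{t(a+i)}$ collapses to something that, combined with the $-p_1$ from the trivial character and the $\pm\sqrt{-q}$-type contribution from the odd-order characters, yields exactly $\frac{-1 \pm \sqrt{-q}}{2}$. This is where the congruence $1 + p_1 = 4p^h$ is used crucially: it pins down $b, c \in \{1, -1\}$ so that $g(\chi^2) = p^{(f-h)/2} \frac{b + c\sqrt{-p_1}}{2}$ has a very constrained form, and then $g(\chi^{2t})$ for general $t$ (via (\ref{pthpower}) applied to the orbit of $2$ under multiplication by $p$ modulo $p_1$) must be shown to behave compatibly with the reduction $\gamma^{in} \equiv \xi_{p_1}^i \pmod{\mathfrak P}$. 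I would organize this by first computing $\psi(\gamma^a D)$ modulo ${\mathfrak P}$ using the Teichmüller/Stickelberger machinery to determine the $\pm$ sign, then confirming the exact value by the absolute-value constraint $|\psi(\gamma^a D) - \frac{-1}{2}|^2 = \frac{q}{4}$ that the skew Hadamard property forces; the two together leave no ambiguity. The remaining steps — the odd-order character contribution and the final assembly — parallel the proof of Theorem~\ref{7mod8} closely and should be routine once the order-$p_1$ term is under control.
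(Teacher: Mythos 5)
Your overall skeleton (verify the condition of Lemma~\ref{lem1} by expanding $\psi(\gamma^a D)=\frac{1}{N}T_a$ over the characters of order dividing $N$, feed in the index~2 Gauss sum evaluations, and use the two normalizations $bc=1$ and $1+2\sum_{i\in\la p\ra}\gamma^{in}=-1$ together with $1+p_1=4p^h$) matches the paper, but you have misplaced where the actual difficulty sits, and the part you defer as ``routine'' is the real content. The characters of order $p_1$, i.e.\ the even powers $\chi^{2t}$ with $t\neq 0$, contribute \emph{nothing}: since $|I|=p_1$ and $\{i\bmod p_1:i\in I\}=\Z/p_1\Z$, the inner sum $\sum_{i\in I}\chi^{-2t}(\gamma^{a+i})$ is a complete sum of $p_1$-th roots of unity and vanishes identically, exactly as in Theorem~\ref{7mod8}. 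So no evaluation of $g(\chi^{2t})$ is ever needed there, and the ``aggregate order-$p_1$ term'' you flag as the main obstacle is zero for free. Conversely, the odd-$j$ terms do \emph{not} parallel Theorem~\ref{7mod8}: in the $p_1\equiv 3\pmod 8$ case the Gauss sums $g(\chi^j)$ for odd $j$ take three genuinely different values, namely $p^{(f-1)/2-h}\sqrt{-p}\,(\frac{b+c\sqrt{-p_1}}{2})^2$ for $j\in\la p\ra$, its conjugate counterpart for $j\in-\la p\ra$, and $p^{(f-1)/2}\sqrt{-p}$ for $j=p_1$, because $(\frac{b+c\sqrt{-p_1}}{2})^2=\frac{1-p_1+2\sqrt{-p_1}}{4}$ is irrational. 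Hence $T_a$ does not collapse by a uniform Gauss-sum value; one must control the asymmetric quantity $X_a=\sum_{j\in\la p\ra}\sum_{i\in I}\xi_N^{-j(a+i)}-\sum_{j\in-\la p\ra}\sum_{i\in I}\xi_N^{-j(a+i)}$, reduce the skew Hadamard requirement (using $1+p_1=4p^h$) to the identity $(1-p_1)(-1)^{j_a}+2(-1)^a-2X_a/\sqrt{-p_1}=(1+p_1)\epsilon_a$ with $\epsilon_a=\pm1$, and then verify it case by case according to whether $a$ lies in $\{0\}$, $\la p\ra$, $-\la p\ra$, $2\la p\ra$, $-2\la p\ra$, or $\{p_1\}$, using $\sum_{i\in\la p\ra}\xi_{p_1}^i=\frac{-1+\sqrt{-p_1}}{2}$, $\sum_{i\in I}\xi_N^{-i}=1-\sqrt{-p_1}$, and the fact that $\frac{p_1-1}{2}$ and $p_1-2$ are quadratic residues modulo $p_1$. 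Your proposal contains no plan for this computation, which is precisely why the construction is restricted to the special $I=\la p\ra\cup 2\la p\ra\cup\{0\}$ rather than an arbitrary transversal as in Theorem~\ref{7mod8}.

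Your proposed fall-back is also circular: you cannot ``confirm the exact value by the absolute-value constraint $|\psi(\gamma^a D)+\frac12|^2=\frac q4$ that the skew Hadamard property forces,'' because that constraint is equivalent to the difference-set property you are trying to establish; it must come out of the Gauss-sum computation, not be assumed. The Stickelberger/Teichm\"uller reduction modulo $\mathfrak P$ is indeed used in the paper, but only once and earlier, in Lemma~\ref{signbc}, to pin down $bc=1$ for the chosen prime $\mathfrak P$ with $1+\sqrt{-p_1}\in\mathfrak P$; inside the proof of the theorem itself everything is an exact computation with the values from Theorem~\ref{gs}. Two smaller points: the Davenport--Hasse lifting is not needed, as you suspected, since the theorem is stated over $\F_q$ itself; and the disjointness $-D\cap D=\emptyset$ follows quickly from $-1\in C_{p_1}$ together with the fact that $2$ (hence also $-1$) is a nonresidue modulo $p_1$, so that translating $I$ by $p_1$ modulo $N$ swaps the residue/nonresidue cosets and misses $I$ -- your instinct to argue from the coset structure of $I$ is right, but it should be carried out rather than left as a hedge.
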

\begin{proof} Since $-1\in C_{p_1}$ and $2$ is a quadratic nonresidue modulo $p_1$, we have $-D\cap D=\emptyset$. That is, $D$ is skew. 

We now proceed to proving the result by using the second part of Lemma~\ref{lem1}. To this end, let $a$ be an arbitrary integer such that $0\leq a\leq N-1$ and let $\psi$ be the additive character defined as in (\ref{defaddchar}). Also let $\chi=\omega_{\mathfrak P}^{(q-1)/N}$, where ${\mathfrak P}$ is chosen such that (\ref{choiceofprime}) holds. We have 
\begin{eqnarray*}
\psi(\ga^a D)&=&\sum_{i\in I}\psi(\ga^a C_i)\\
                    &=&\frac{1}{N}\sum_{i\in I}\sum_{x\in\F_q^*}\psi(\ga^{a+i}x^N)\\
                    &=&\frac{1}{N}T_a,
\end{eqnarray*}
where
\[T_a=\sum_{j=0}^{N-1}g(\chi^j)\sum_{i\in I}\chi^{-j}(\ga^{a+i}).\]
When $j=0$, we have $g(\chi^0)=-1$, and $\sum_{i\in I}\chi^{-j}(\ga^{(a+i)})=p_1$. If $j\neq 0$ is even, then  $\sum_{i\in I}\chi(\ga^{-j(a+i)})=\sum_{i=0}^{p_1-1}\chi(\ga^{-j(a+i)})=\chi(\ga^{-ja})\frac{\chi(\ga)^{-jp_1}-1}{\chi(\ga^{-j})-1}=0$. Now note that every odd integer in the interval 
$[1, N-1]$ is congruent (modulo $N$) to an element in $\langle p\rangle$, or an element in $-\langle p\rangle$, or $p_1$. 
Therefore, we have
\begin{align*}
T_a=&-p_1+\sum_{j\in \langle p\rangle}g(\chi^j)\sum_{i\in I}\bar{\chi}^j(\ga^{a+i})+\sum_{j\in -\langle p\rangle}g(\chi^j)\sum_{i\in I}\bar{\chi}^j(\ga^{a+i})\\
&+g(\chi^{p_1})\sum_{i\in I}\bar{\chi}^{p_1}(\ga^{a+i}).
\end{align*}

Specializing Theorem~\ref{gs} to the $m=1$ case and noting that $f-1\equiv 0$ (mod $4$) since $p_1\equiv 3$ (mod $8$), we have 
$$g(\chi)=p^{(f-1)/2-h}\sqrt{-p}\left(\frac{b+c\sqrt{-p_1}}{2}\right)^2,\; g(\chi^{p_1})=p^{(f-1)/2}\sqrt{-p},$$
where $b,c$ are the same as in the evaluation of $g(\chi^2)$ (c.f. \cite[p.~2531]{yx}), and $bc=1$ by our choice of ${\mathfrak P}$. Also recall that by (\ref{pthpower}), we have $g(\chi^p)=g(\chi)$. We compute 
\begin{align*}
T_a=&-p_1+p^{(f-1)/2-h}\sqrt{-p}\left(\frac{b+c\sqrt{-p_1}}{2}\right)^2\sum_{j\in \langle p\rangle}\sum_{i\in I}\bar{\chi}^j(\ga^{a+i})\\&+
p^{(f-1)/2-h}\sqrt{-p}\left(\frac{b-c\sqrt{-p_1}}{2}\right)^2\sum_{j\in -\langle p\rangle}\sum_{i\in I}\bar{\chi}^j(\ga^{a+i})\\&+
p^{(f-1)/2}\sqrt{-p}\sum_{i\in I}\bar{\chi}^{p_1}(\ga^{a+i}).
\end{align*}
    
Since $\{i\pmod{p_1}\mid i\in I\}=\Z/p_1\Z$, for each $a$, $0\leq a\leq N-1$, there is a unique $i_a$ in $I$ such that $p_1|(a+i_a)$. Write $a+i_a=p_1j_a$. We have 
\begin{align*}\label{easyterm}
&\sum_{j\in \langle p\rangle}\sum_{i\in I}\bar{\chi}^j(\ga^{a+i})+\sum_{j\in -\langle p\rangle}\sum_{i\in I}\bar{\chi}^j(\ga^{a+i})+\sum_{i\in I}\bar{\chi}^{p_1}(\ga^{a+i})\\
&=\sum_{j \,\textup{odd}}\xi_{N}^{-j(a+i)}=\sum_{i\in I}\xi_{2p_1}^{-(a+i)}\sum_{u=0}^{p_1-1}\xi_{p_1}^{-u(a+i)}\\
&=(-1)^{j_a}p_1.
\end{align*}
  
In order to prove that $D$ is a skew Hadamard difference set, we must show that 
\begin{equation}\label{condition}
T_a=-p_1+p^{(f-1)/2}\sqrt{-p}p_1\epsilon_a
\end{equation}
for some $\epsilon_a=\pm 1$. Noting that $(\frac{b+c\sqrt{-p_1}}{2})^2=\frac{b^2-c^2p_1+2bc\sqrt{-p_1}}{4}=\frac{1-p_1+2\sqrt{-p_1}}{4}$, and 
$$\sum_{j\in \langle p\rangle}\sum_{i\in I}\bar{\chi}^j(\ga^{a+i})+\sum_{j\in -\langle p\rangle}\sum_{i\in I}\bar{\chi}^j(\ga^{a+i})+\sum_{i\in I}\bar{\chi}^{p_1}(\ga^{a+i})=(-1)^{j_a}p_1,$$ one simplifies (\ref{condition}) to  
\[\frac{1-p_1}{4p^h}p_1(-1)^{j_a}+(1-\frac{1-p_1}{4p^h})(-1)^a+\frac{\sqrt{-p_1}}{2p^h}X_a=p_1\epsilon_a,\]
where $X_a=\sum_{j\in \langle p\rangle}\sum_{i\in I}\xi_N^{-j(a+i)}-\sum_{j\in -\langle p\rangle}\sum_{i\in I}\xi_N^{-j(a+i)}$.  Using the assumption that $1+p_1=4p^h$, one further simplifies the last equation to
\begin{align}\label{cdn}
(1-p_1)(-1)^{j_a}+2(-1)^a-2\frac{X_a}{\sqrt{-p_1}}=(1+p_1)\epsilon_a.
\end{align}
Below we will prove that (\ref{cdn}) always holds, thus proving that $D$ is a skew Hadamard difference set. We recall some useful facts.

(1) $\{i\pmod{p_1}\mid i\in\langle p\rangle\}$ is the set of nonzero squares of $\Z/p_1\Z$.

(2) The odd integers $\frac{p_1-1}{2}$ and $p_1-2$ are congruent to elements in $\langle p\rangle$ modulo $p_1$ since both $2$ and $-1$ are nonresidues modulo $p_1$.

(3) By our choice of $\xi_{q-1}$ we have $\sum_{i\in\langle p\rangle}\xi_{p_1}^i=\frac{-1+\sqrt{-p_1}}{2}$.

(4) We have $\sum_{i\in\langle p\rangle}\xi_{N}^{-i}=\sum_{i\in\langle p\rangle}\xi_{N}^{(2\cdot (p_1-1)/2-p_1)i}=\sum_{i\in\langle p\rangle}\xi_{p_1}^{\frac{p_1-1}{2}i}(-1)^i=-\frac{-1+\sqrt{-p_1}}{2}=\frac{1-\sqrt{-p_1}}{2}$, since $\frac{p_1-1}{2}$ is a square in $\Z/p_1\Z$.\\

Now set $Y_a:=\sum_{j\in \langle p\rangle}\sum_{i\in I}\xi_N^{-j(a+i)}$. Then $X_a=Y_a-\overline{Y_a}$. We have 
\[Y_a=\sum_{j\in \langle p\rangle}\xi_N^{-aj}\sum_{i\in I}\xi_N^{-ij}=(\sum_{j\in \langle p\rangle}\xi_N^{-aj})(\sum_{i\in I}\xi_N^{-i}).\] 
The last sum above,  $\sum_{i\in I}\xi_N^{-i}$, can be evaluated as follows: $\sum_{i\in I}\xi_N^{-i}=\sum_{i\in \langle p\rangle}\xi_N^{-i}+\sum_{i\in 2\langle p\rangle}\xi_N^{-i}+1=\frac{1-\sqrt{-p_1}}{2}+\frac{-1-\sqrt{-p_1}}{2}+1=1-\sqrt{-p_1}$. Therefore
$$Y_a=(\sum_{j\in \langle p\rangle}\xi_N^{-aj})(1-\sqrt{-p_1}).$$

We consider the following six cases.

\noindent {\bf Case 1.} $a=0$. In this case, $i_a=0$, $j_a=0$. We have $\sum_{j\in \langle p\rangle}\xi_N^{-aj}=\frac{p_1-1}{2}$, $Y_a=\frac{p_1-1}{2}(1-\sqrt{-p_1})$, and $X_a=-(p_1-1)\sqrt{-p_1}$. Condition (\ref{cdn}) is satisfied with $\epsilon_a=1$.\\

\noindent {\bf Case 2.}  $a\in\langle p\rangle$. In this case, $i_a=2\cdot\frac{p_1-1}{2}a\in 2\langle p\rangle$. $j_a=a\equiv 1 \pmod{2}$. We have $\sum_{j\in \langle p\rangle}\xi_N^{-aj}=\sum_{j\in \langle p\rangle}\xi_N^{-j}=\frac{1-\sqrt{-p_1}}{2}$, $Y_a=\frac{1-\sqrt{-p_1}}{2}\cdot(1-\sqrt{-p_1})=\frac{1-p_1}{2}-\sqrt{-p_1}$, and $X_a=-2\sqrt{-p_1}$. Condition (\ref{cdn}) is satisfied with $\epsilon_a=1$.\\

\noindent{\bf Case 3.} $a\in-\langle p\rangle$. In this case, $i_a=-a\in \langle p\rangle$, $j_a=0$. We have $\sum_{j\in \langle p\rangle}\xi_N^{-aj}=\sum_{j\in \langle p\rangle}\xi_N^{j}=\frac{1+\sqrt{-p_1}}{2}$, $Y_a=\frac{1+\sqrt{-p_1}}{2}\cdot(1-\sqrt{-p_1})=\frac{p_1+1}{2}$, and $X_a=0$. Condition (\ref{cdn}) is satisfied with $\epsilon_a=-1$.\\

\noindent{\bf Case 4.} $a\in 2\langle p\rangle$. In this case, write $a=2u$ for some $u\in\langle p\rangle$. We have $i_a=(p_1-2)u\in \langle p\rangle$, $j_a=u\equiv 1 \pmod{2}$, and $\sum_{j\in \langle p\rangle}\xi_N^{-aj}=\sum_{j\in \langle p\rangle}\xi_{p_1}^{-j}=\frac{-1-\sqrt{-p_1}}{2}$. It follows that $Y_a=\frac{-1-\sqrt{-p_1}}{2}\cdot(1-\sqrt{-p_1})=-\frac{p_1+1}{2}$. Thus $X_a=0$. Condition (\ref{cdn}) is satisfied with $\epsilon_a=1$.\\

\noindent{\bf Case 5.} $a\in -2\langle p\rangle$. In this case, $i_a=-a\in 2\langle p\rangle$, $j_a=0$. We have $\sum_{j\in \langle p\rangle}\xi_N^{-aj}=\sum_{j\in \langle p\rangle}\xi_{p_1}^{j}=\frac{-1+\sqrt{-p_1}}{2}$, $Y_a=\frac{-1+\sqrt{-p_1}}{2}\cdot(1-\sqrt{-p_1})=\frac{-1+p_1}{2}+\sqrt{-p_1}$, and $X_a=2\sqrt{-p_1}$. Condition (\ref{cdn}) is satisfied with $\epsilon_a=-1$.\\

\noindent{\bf Case 6.} $a=p_1$. In this case, $i_a=0$, $j_a=1$. We have $\sum_{j\in \langle p\rangle}\xi_N^{-aj}=-\frac{(p_1-1)}{2}$, $Y_a=-\frac{(p_1-1)}{2}(1-\sqrt{-p_1})$, and $X_a=(p_1-1)\sqrt{-p_1}$. Condition (\ref{cdn}) is satisfied with $\epsilon_a=-1$.

The proof is now complete.
\end{proof}

\begin{remark}
{\em It would be interesting to extend the construction in Theorem~\ref{3mod8} to the general case where $N=2p_1^m$, $p_1\equiv 3\pmod 8$ and $m\geq 2$ is arbitrary.  If this can be done, then we will obtain infinite families of skew Hadamard difference sets in this way even though currently we only know finitely many pairs $(p_1,p)$ such that $1+p_1=4p^h$, where $h$ is the class number of $\Q(\sqrt{-p_1})$.}
\end{remark}

\begin{example}
{\em Let $p=3, N=22,  p_1=11$. It is routine to check that ${\rm ord}_{22}(3)=5=\phi(N)/2$. Let $f=5$, $q=3^5$ and $n=\frac{q-1}{p_1}$. The class number $h$ of $\Q(\sqrt{-11})$ is 1 (c.f. \cite[p.~514]{hcohen}). Therefore the condition $1+p_1=4p^h$ is indeed satisfied. Let $\F_{3^5}$ be the finite field as in (\ref{model}) with ${\mathfrak P}$ chosen in such a way that (\ref{choiceofprime}) holds. Choose a primitive element $\gamma$ of $\F_{3^5}$ such that $\gamma$ such that $1+2\sum_{i\in\langle p\rangle}\gamma^{in}=-1$, and let $C_i$, $0\leq i\leq 21$, be the cyclotomic classes with respect to this choice of $\gamma$. Define $I:=\langle 3\rangle\cup 2\langle 3\rangle\cup \{0\}=\{0,1,2,3,5,6,8,9,10,15,18\}$. Then 
$D=\cup_{i\in I}C_i$ is a skew Hadamard difference set in $(\F_{3^5},+)$.  Using a computer one finds that $\Aut(\Dev(D))$ has size $3^5\cdot5\cdot 11$, while the automorphism group of the corresponding Paley design has size $3^5\cdot5\cdot 11^2$. We conclude that ${\rm Dev}(D)$ is not isomorphic to the Paley design.}
\end{example}


\begin{thebibliography}{99}

\bibitem{baumert} L. D. Baumert, {\it Cyclic Difference Sets}, Lecture Notes in Mathematics 182, Springer-Verlag, 1971.

\bibitem{bmy} L. D. Baumert, J. Mykkeltveit, Weight distributions of some irreducible cyclic codes, {\it DSN Progr. Rep.}, {\bf 16} (1973), 128--131.

\bibitem{BMW}L. D. Baumert, M. H. Mills, and R. L. Ward,  Uniform Cyclotomy, {\it J. Number Theory} {\bf 14} (1982), 67--82.

\bibitem{bew}B. C. Berndt, R. J. Evans, and K. S. Williams, {\it Gauss and Jacobi
Sums}, A Wiley-Interscience Publication, 1998.

\bibitem{bjl} T. Beth, D. Jungnickel, and H. Lenz, {\it Design Theory}. Vol. I. Second edition. Encyclopedia of Mathematics and its Applications, 78. Cambridge University Press, Cambridge, 1999.



\bibitem{cxs} Y. Q. Chen, Q. Xiang, and S. Sehgal, An exponent bound on skew Hadamard abelian difference sets,  {\it Designs, Codes and Cryptogr.}  {\bf 4}  (1994), 313--317.

\bibitem{cpol} Y. Q. Chen, J. Polhill, Paley type group schemes and planar Dembowski-Ostrom polynomials, {\it Disc. Math.} {\bf 311} (2011), 1349--1364.

\bibitem{chenfeng} Y. Q. Chen, T. Feng, Abelian and non-abelian Paley type group schemes, preprint.

\bibitem{hcohen} H. Cohen, {\it A course in computational algebraic number theory}, GTM 138, Springer, 1996.

\bibitem{dy} C. Ding, J. Yuan,  A family of skew Hadamard difference sets, {\it J. Combin. Theory} (A), {\bf 113} (2006), 1526--1535.

\bibitem{dwx} C. Ding, Z. Wang and Q. Xiang,  Skew Hadamard difference sets from the Ree-Tits slice symplectic spreads in ${\rm PG}(3,3^{2h+1})$,  {\it J. Combin. Theory} (A), {\bf 114 } (2007), 867--887.


\bibitem{evans} R. J. Evans, Nonexistence of twentieth power residue difference sets, {\it Acta Arith.} {\bf 84} (1999), 397-402. 

\bibitem{tfeng} T. Feng, Non-abelian skew Hadamard difference sets fixed by a prescribed automorphism,  {\it J. Combin. Theory} (A), {\bf 118 } (2011), 27--36.

\bibitem{FX} T. Feng, Q. Xiang, Strongly regular graphs from union of cyclotomic classes, {\tt ArXiv: 1010.4107v2}.

\bibitem{ikutam} T. Ikuta, A. Munemasa, Pseudocyclic association schemes and strongly regular graphs, {\it Europ. J. Combin.} {\bf 31} (2010), 1513--1519. 


\bibitem{kantor} W. M. Kantor, 2-Transitive symmetric designs, {\it Trans. Amer. Math. Soc.}, {\bf 146} (1969), 1--28.

\bibitem{kibler} R. E. Kibler, A summary of noncyclic difference sets, $k<20$, {\it J. Combin. Theory} (A), {\bf 25} (1978), 62--67.

\bibitem{Lander} E. S. Lander, {\it Symmetric designs: an algebraic approach}, Cambridge University Press, 1983.


\bibitem{del} C. L. M.  de Lange, Some new cyclotomic strongly regular graphs, {\it J. Alg. Combin.}  {\bf 4} (1995), 329--330.

\bibitem{Lang} P. Langevin, Calculs de certaines sommes de Gauss, {\it J. Number Theory}, {\bf 63} (1997), 59--64. 


\bibitem{Mbo} O. D. Mbodj, Quadratic Gauss sums, {\it Finite Fields and Appl.}, {\bf 4} (1998), 347--361.

\bibitem{McE} R. J. McEliece, Irreducible cyclic codes and Gauss sums. Combinatorics (Proc. NATO Advanced Study Inst., Breukelen, 1974), Part 1: Theory of designs, finite geometry and coding theory, pp. 179--196. Math. Centre Tracts, No. 55, Math. Centrum, Amsterdam.

\bibitem{mv} P. Meijer, M. van der Vlugt, The evaluation of Gauss sums for characters of 2-power order, {\it J Number Theory}, {\bf 100} (2003), 381--395.

\bibitem{muzy} M. E. Muzychuk, On skew Hadamard difference sets, {\tt arXiv:1012.2089}.

\bibitem{paley} R. E. A. C. Paley, On orthogonal matrices, {\it J. Math. Phys.} {\bf 12} (1933), 311--320.

\bibitem{storer} T. Storer, {\it Cyclotomy and difference sets}, Markham, Chicago, 1967.

\bibitem{wh} G. B. Weng, L. Hu, Some results on skew Hadamard difference sets, {\it Des. Codes and Cryptogr.} {\bf 50} (2009), 93--105.


\bibitem{wqwx} G. B. Weng, W. S. Qiu, Z. Wang and Q. Xiang,  Pseudo-Paley graphs and skew Hadamard difference sets from presemifields, {\it Des. Codes and Cryptogr.} {\bf 44} (2007), 49--62.

\bibitem{Xiang} Q. Xiang, Recent progress in algebraic design theory, {\it Finite Fields and Their Aplications} (Ten Year Anniversary Edition) {\bf 11} (2005), 622--653.

\bibitem{yx} J. Yang, L. Xia, Complete solving of explicit evaluation of Gauss sums in the index 2 case, {\it  Sci. China Ser. A} {\bf 53} (2010), 2525--2542.

\bibitem{yx2} J. Yang, L. Xia, A note on the sign (unit root) ambiguities of Gauss sums in index $2$ and $4$ case, arXiv:0912.1414v1.

\end{thebibliography}
\end{document}